\newtheorem{thm}{Theorem}
\newtheorem{prop}{Proposition}
\newtheorem{lem}{Lemma}
\newtheorem{dfn}{Definition}
\newtheorem{examples}{Example}
\newtheorem{remark}{Remark}[section]
\numberwithin{equation}{section}
\numberwithin{figure}{section}
\let\c@assumption=\c@thm
\let\c@prop=\c@thm
\let\c@lem=\c@thm
\let\c@cor=\c@thm
\let\c@dfn=\c@thm
\let\c@examples=\c@thm
\newcommand{\squashup}[0]{}
\renewcommand{\epsilon}[0]{\varepsilon}
\renewcommand{\phi}[0]{\varphi}
\newcommand{\A}{\mathcal A}  
\newcommand{\C}{\mathbb  C}  
\newcommand{\F}{\mathcal F}  
\renewcommand{\H}{\mathrm H}  
\renewcommand{\P}{\mathbb  P}  
\newcommand{\R}{\mathbb  R}  
\newcommand{\T}{\mathbb  T}  
\newcommand{\V}{\mathbb  V}  
\newcommand{\X}{\mathcal X}  
\newcommand{\Y}{\mathcal Y}  
\newcommand{\Z}{\mathbb  Z}  
\newcommand{\g}{\mathfrak g}  
\renewcommand{\i}{\mathtt   i}  
\renewcommand{\t}{\mathfrak t}  
\renewcommand{\emph}[1]{\textbf{#1}}
\newcommand{\etale}[0]{\'{e}tale} 
\newcommand{\sq}[1]{/\!\!/_{\!#1}} 
\newcommand{\acts}[0]{\curvearrowright} 
\newcommand{\dual}[0]{\vee} 
\newcommand{\rd}[0]{\partial} 
\renewcommand{\vec}[1]{{#1}} 
\newcommand{\pair}[1]{\left\langle#1\right\rangle} 
\newcommand{\lm}[0]{\mu^{-1}(\tau)} 
\newcommand{\TC}[0]{\mathbb{T}_\mathbb{C}} 
\newcommand{\DG}[0]{\mathrm{DG}} 
\newcommand{\id}[0]{\mathrm{id}} 
\DeclareMathOperator{\Hom}{Hom}
\DeclareMathOperator{\Mor}{Mor}
\DeclareMathOperator{\Ext}{Ext}
\DeclareMathOperator{\src}{src}
\DeclareMathOperator{\tgt}{tgt}
\DeclareMathOperator{\Lie}{Lie}
\DeclareMathOperator{\grad}{grad}
\DeclareMathOperator{\coker}{coker}
\DeclareMathOperator{\codim}{codim}
\DeclareMathOperator{\Cone}{Cone}
\DeclareMathOperator{\Vect}{Vect}
\newcommand{\Diff}{\mathfrak{Diff}}
\renewcommand{\1}{{{\mathchoice {\rm 1\mskip-4mu l} {\rm 1\mskip-4mu l}{\rm 1\mskip-4.5mu l} {\rm 1\mskip-5mu l}}}}
\newcommand{\org}[1]{}
\begin{document}

\title{The symplectic Deligne-Mumford stack associated to a stacky polytope}
\author{Hironori Sakai}
\date{}

\maketitle

\begin{abstract}
 We discuss a symplectic counterpart of the theory of stacky fans. First, we define a stacky polytope and construct the symplectic Deligne--Mumford stack associated to the stacky polytope. Then we establish a relation between stacky polytopes and stacky fans: the stack associated to a stacky polytope is equivalent to the stack associated to a stacky fan if the stacky fan corresponds to the stacky polytope.
\end{abstract}

\section{Introduction}
Lerman and Malkin introduced a symplectic structure on a Deligne--Mumford stack. They also define a Hamiltonian group actions on a symplectic Deligne--Mumford stack \cite{0908.0903}. The motivation of their work is based on the following proposal: using stacks is preferable to using ordinary orbifold atlases when we study geometries of orbifolds.

The theory of stacks has been developed by algebraic geometers. Abramovich, Graber and Vistoli \cite{abramovich06:_gromov_witten_delig_mumfor} constructed an algebraic counterpart of the theory of Chen--Ruan: the orbifold Chow ring and Gromov--Witten theory on a smooth complex Deligne--Mumford stack. Afterwards Borisov, Chen and Smith \cite{borisov05:_chow_delig_mumfor} introduced the concept of stacky fan to construct a toric Deligne--Mumford stack efficiently and compute the orbifold Chow ring of the toric Deligne--Mumford stack in terms of the stacky fan. A stacky fan can be used to study Gromov--Witten theory and mirror symmetry. (See Iritani \cite{iritani07:_real_i} for example.)

The aim of this paper is to introduce a stacky polytope as a counterpart of a stacky fan and to establish a relation between stacky polytopes and stacky fans, applying the stack description developed by Lerman and Malkin. Starting with a stacky polytope, we discuss a construction of symplectic Deligne--Mumford stacks. The main theorem says that the Deligne--Mumford stack associated to a stacky polytope $\vec{\Delta}$ is equivalent to the Deligne--Mumford stack associated to a stacky fan $\vec{\Sigma}$ if $\vec{\Sigma}$ corresponds to $\vec{\Delta}$ (Theorem \ref{thm:equivalence_between_two_stacks}). We do not discuss the orbifold cohomology in this paper, but the terminology of stacky polytopes could be useful when we compute orbifold cohomology in a similar way to Borisov--Chen--Smith. (See also Chen--Hu \cite{chen06:_chen_ruan}.)

This paper is organised as follows. In section 2, we review briefly the theory of stacks and related geometric concepts. In section 3, we discuss symplectic quotients in terms of stacks, using the notion developed by Lerman and Malkin. This construction is well-known for smooth quotients. After that we introduce stacky polytopes and construct the compact symplectic Deligne--Mumford stack associated to a stacky polytope. Then we discuss stacky polytopes which generate torus quotients. For example we describe explicitly a stacky polytope which generates a weighted projective space. In section 4, we briefly review stacky fans and discuss a relation between stacky polytopes and stacky fans.

\section{Symplectic Deligne--Mumford stacks}

In this section, we review briefly the theory of Deligne--Mumford stacks and their differential or symplectic geometry. There are several expository articles of the theory of differentiable stacks: Behrend--Xu \cite{behrend08:_differ_stack_gerbes}, Heinloth \cite{heinloth05:_notes} and Metzler \cite{metzler03:_topol_smoot_stack}. As far as possible we follow the notation of Behrend--Xu.

\subsection{The category of stacks}
\label{subsec:stacks}

To deal with stacks, we first have to fix a category equipped with a Grothendieck topology. We will only use the category of smooth manifolds and smooth maps. This category is denoted by $\Diff$ and equipped with the Grothendieck topology defined as follows. For a smooth manifold $U \in \Diff$ a family $\{f_i: U_i \to U\}_i$ of smooth maps to $U$ is said to be a \emph{covering family} (or just a \emph{covering}) if each $f_i$ is an \etale{} map (i.e.\! a local diffeomorphism) and $\bigcup_i f_i(U_i) = U$. Then the collections of coverings define a Grothendieck topology on $\Diff$ \cite[Section 2]{metzler03:_topol_smoot_stack}.

A \emph{category fibred in groupoids} (over $\Diff$) is a category $\X$ equipped with a functor $F_\X: \X\to\Diff$ which satisfies the following conditions.
\begin{enumerate}[(F1)]
 \item For each smooth map $f: V \to U$ and an object $x \in \X$ with $F_\X(x) = U$, there is an arrow $a: y\to x$ in $\X$ such that $F_\X(a)=f$. The object $y$ is called the \emph{pullback} of $x$ by $f$ and denoted by $f^*x$ or $x|_V$.
 \item If we have two arrows $a: y\to x$ and $b: z\to x$ in $\X$, then for any smooth map $f: F_\X(y)\to F_\X(z)$ with $F_\X(b) \circ f=F_\X(a)$ there is a unique arrow $c: y\to z$ such that $F_\X(c) = f$ and $bc=a$.
\end{enumerate}
By the definition, a category fibred in groupoids $\X$ has the following properties.
\begin{itemize}
 \item An object $x \in \X$ is said to \emph{lie over} $U \in \Diff$ if $F_\X(x) = U$. An arrow $a:y \to x$ is said to \emph{lie over} $U$ if $F_\X(a) = \id_U$. The subcategory consisting of objects and arrows lying over $U$ is denoted by $\X(U)$ and called the \emph{fibre} of $\X$ over $U$. The subcategory $\X(U)$ is a groupoid: a category whose arrows are all invertible.
 \item Because of the condition (F2), the pullback $y$ in the condition (F1) is almost unique in the following sense: for two pullbacks $y$ and $z$ of $x$ by $f$ there is a unique invertible arrow $y \to z$ in $\X(U)$.
 \item \squashup 
       For an arrow $a:y \to x$ lying over $U$ and a smooth map $f:V \to U$, the condition (F2) guarantees that there is a unique arrow $y|_V \to x|_V$. This arrow is called the \emph{pullback} of $a$ by $f$ and denoted by $f^*a$ or $a|V$.
\end{itemize}

\begin{remark}
 In this paper, for any groupoid the source map and the target map are denoted by $\src$ and $\tgt$ respectively. Namely for an arrow $a:y \to x$, we have $\src(a) = y$ and $\tgt(a) = x$.
\end{remark}

A category fibred in groupoids $\X$ is said to be a \emph{stack} (over $\Diff$) if $\X$ satisfies the glueing conditions \cite[Definition 2.4]{behrend08:_differ_stack_gerbes}. (We never use the conditions explicitly.)

The stacks which we mainly deal with in this paper are quotient stacks. If a Lie group $G$ acts on a manifold $M$, we can define the \emph{quotient stack} $[M/G]$ as follows. An object of the category $[M/G]$ is a pair of a principal $G$-bundle $\pi: P \to U$ and a $G$-equivariant map $\epsilon: P \to M$. Here the latter means $\epsilon(p \cdot g) = g^{-1} \cdot \epsilon(p)$ for any $g \in G$ and $p \in P$. The object is written as a diagram $U \stackrel{\pi}{\leftarrow} P \stackrel{\epsilon}{\rightarrow} M$ and often abbreviated to $P$ if no confusion arises. An arrow from $V \stackrel{\pi'}{\leftarrow} Q \stackrel{\epsilon'}{\rightarrow} M$ to $U \stackrel{\pi}{\leftarrow} P \stackrel{\epsilon}{\rightarrow} M$ is a pair of smooth maps $(f,\tilde{f})$ such that $\tilde{f}$ is $G$-equivariant and the following diagram commutes.
\[
 \xymatrix@R=4pt@C=60pt{
 U & P \ar[l]_{\pi} \ar[dr]^{\epsilon} & \\
 & & M. \\
 V \ar[uu]^{f} & Q \ar[l]^{\pi'} \ar[uu]^{\tilde{f}} \ar[ur]_{\epsilon'} & 
}
\]
A functor $F: [M/G]\to\Diff$ is defined by $F(U \leftarrow P \rightarrow M) = U$ and $F(f,\tilde{f}) = f$. 

Pullbacks are given as follows. For a smooth map $f: V \to U$ and an object $U \stackrel{\pi}{\leftarrow} P \stackrel{\epsilon}{\rightarrow} M$ lying over $U$, the pullback $f^*P \in [M/G](V)$ is $V \stackrel{\pi'}{\leftarrow} f^*P \stackrel{\epsilon'}{\rightarrow} M$ in the following commutative diagram.
\[
\xymatrix@R=4pt@C=60pt{
 U & P \ar[l]_{\pi} \ar[dr]^{\epsilon} & \\
 & & M. \\
 V \ar[uu]^{f} & f^*P \ar[l]^{\pi'} \ar[uu]^{\widetilde{f}} \ar[ur]_{\epsilon'}
 & 
 }
\]
Here $f^*P = V \times_{U} P$ is the pullback of the $G$-bundle $P$ to $U$ and $\epsilon'(v,p) = \epsilon(p)$. The universality of the (ordinary) pullback guarantees the condition (F2). Therefore $[M/G]$ is a category fibred in groupoids. Moreover we can see that $[M/G]$ is a stack.

\begin{dfn}
 For stacks $F_\X: \X\to\Diff$ and $F_\Y: \Y\to\Diff$, a \emph{morphism of stacks} is a functor $\phi: \X \to \Y$ satisfying $F_\Y \circ \phi = F_\X$.
 The class of morphisms of stacks from $\X$ to $\Y$ is denoted by $\Mor(\X,\Y)$.
\end{dfn}
\begin{dfn}
 Let $\phi: \X \to \Y$ and $\phi': \X \to \Y$ be two morphisms of stacks. A \emph{map of morphisms} is a natural isomorphism $\alpha:\phi \Rightarrow \phi'$ of functors.
\end{dfn}

The category of stacks (over $\Diff$) forms a $2$-category. Namely for any two stacks $\X$ and $\Y$, the class $\Mor(\X,\Y)$ forms a category again. The class of arrows consists of maps of morphisms. 

In a $2$-category, two morphisms are identified if there is an arrow between them in $\Mor(\X,\Y)$. Moreover two stacks are regarded as the same stack if there is an equivalence between them instead of an isomorphism. 

\begin{dfn}
 Two stacks $\X$ and $\Y$ are said to be \emph{equivalent} if there are morphisms of stacks $\phi:\X \to \Y$ and $\psi:\Y \to \X$ such that there are maps of morphisms $\id_\X \Rightarrow \psi\circ\phi$ and $\id_\Y \Rightarrow \phi\circ\psi$.
\end{dfn}

\begin{remark}
 Some authors use the terminology ``isomorphic'' instead of ``equivalent''. In this paper we follow the usual terminology of the theory of $2$-categories.
\end{remark}


A manifold $M$ can naturally be considered as a stack $\X_M$ as follows: the class of objects consists of smooth maps whose target is $M$ and an arrow from $f:U \to M$ and $g:V \to M$ is a smooth map $a: U \to V$ such that $g \circ a = f$. The fibre of $\X_M$ over $U$ is given by $\mathcal{C}^\infty(U,M)$. Note that we consider the set $\mathcal{C}^\infty(U,M)$ as a discrete category. 
Thanks to the Yoneda lemma, the category $\Diff$ is fully embedded into the category of stacks. Therefore we identify $M$ with $\X_M$.

\begin{dfn}
 A stack is said to be \emph{representable} if there is a manifold which is equivalent to the stack.  
\end{dfn}

\subsection{Deligne--Mumford stacks}
\label{subsec:DM_stacks}

Note that we can always take a ($2$-)fibred product in the category of stacks. 


\begin{dfn}
 A morphism of stacks $p$ from a manifold $X$ to a stack $\X$ is called an \emph{atlas} (resp. \emph{\etale{} atlas}) of $\X$ if for any morphism of stacks from a manifold $Y$ to $\X$
 \begin{itemize}
  \item the fibred product $X \times_{\X} Y$ is representable, and
  \item the projection $X \times_{\X} Y \to Y$ is a surjective submersion (resp. surjective locally diffeomorphism).
 \end{itemize}
\end{dfn}

If $p: X_0 \to \X$ is an atlas of a stack $\X$, then the fibred product $X_0 \times_{\X} X_0$ is equivalent to a manifold $X_1$ and there are two surjective submersions $\src,\tgt: X_1 \to X_0$ as projections:
\[
 \xymatrix@C=50pt{
 X_1 \ar[r]^{\tgt} \ar[d]_{\src} & X_0 \ar[d]^{p} \\
 X_0 \ar[r]_{p} & \X.
 }
\]
Then $X_1 \rightrightarrows X_0$ form a Lie groupoid. 
The Lie groupoid $X_1 \rightrightarrows X_0$ is said to be \emph{associated to the atlas} $p: X_0 \to \X$. Note that the above diagram is $2$-commutative, i.e.\! there is a map of morphisms from $p \circ \src$ to $p \circ \tgt$. 

\begin{dfn}
 Let $\X$ be a stack having an atlas $p: X_0 \to \X$. We say that the stack $\X$ is \emph{separated} if the map 
 \[
 X_1 \to X_0 \times X_0;\ a \mapsto (\src(a),\tgt(a))
 \]
 is proper. This definition is independent of the choice of the atlas \cite[Section 2.4]{behrend08:_differ_stack_gerbes}.
\end{dfn}

\begin{dfn}
 A stack is said to be \emph{differentiable} (resp. \emph{Deligne--Mumford}) if the stack is separated and admits an atlas (resp. \etale{} atlas). 
\end{dfn}


The \emph{underlying space} of the differentiable stack $\X$ is the topological space $|\X| = X_0/\!\!\sim$, where $x \sim y$ for $x, y \in X_0$ if there is $a \in X_1$ such that $\src(a)=x$ and $\tgt(a)=y$. The topology of the underlying space is well-defined. A differentiable stack is said to be \emph{compact} if its underlying space is compact. 

Each quotient stack has an atlas. For a $G$-action on $M$, the \emph{natural projection} is a morphism $p:  M \to [M/G]$ defined by $p(f: U \to M) = \bigl( U \stackrel{\pi}{\leftarrow} G \times U \stackrel{\epsilon}{\rightarrow} M \bigr)$. Here $\pi$ is the projection, $\epsilon(g,u) = g \cdot f(u)$ and the right $G$-action on $G \times U$ is defined by $(g,u) \cdot h = (h^{-1}g,u)$. For every arrow $a: f' \to f$ in $M$, $p(a)$ is naturally defined. The groupoid associated to the atlas $p$ is the action groupoid $G \times M \rightrightarrows M$. Therefore if the $G$-action on $M$ is proper, then the quotient stack is differentiable. The underlying space of $[M/G]$ is the quotient topological space $M/G$.

\begin{prop}
 Let $M$ be a manifold equipped with a smooth action of a Lie group $G$. If the action is proper and locally free, then there is an \etale{} atlas of the quotient stack $[M/G]$ i.e.\! the quotient stack $[M/G]$ is Deligne--Mumford.
\end{prop}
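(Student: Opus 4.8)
The plan is to reuse the analysis already carried out for proper actions and only upgrade the atlas. The text preceding the statement shows that a proper action makes $[M/G]$ separated and differentiable, with the action groupoid $G \times M \rightrightarrows M$ realized as the groupoid associated to the natural projection $p\colon M \to [M/G]$. Since a Deligne--Mumford stack is a separated stack admitting an \etale{} atlas, only the \etale{} atlas remains to be produced; the given atlas $p$ will not do, because the source and target maps of the action groupoid are submersions but not local diffeomorphisms when $\dim G > 0$. First I would record the consequence of the hypotheses on stabilisers: properness forces each $G_m$ to be compact and local freeness forces it to be discrete, so every $G_m$ is finite. In particular the Lie algebra of $G_m$ vanishes, so the infinitesimal action $\g \to T_mM$, $\xi \mapsto \xi_M(m)$, is injective at every $m \in M$. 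This injectivity is the fact that will ultimately make a transversal atlas \etale{}.

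Next I would invoke the slice theorem for proper actions: each $m$ admits a slice $S_m$, a $G_m$-invariant submanifold through $m$ of dimension $\dim M - \dim G$ transverse to the orbit, together with a $G$-equivariant open embedding $G \times_{G_m} S_m \cong U_m := G \cdot S_m \subseteq M$. I would choose a family $\{S_i\}$ with $\bigcup_i U_i = M$, set $X_0 = \bigsqcup_i S_i$, and let $\phi\colon X_0 \to [M/G]$ be the composite $X_0 \hookrightarrow M \xrightarrow{p} [M/G]$. Because the $U_i$ cover $M$, the induced map of underlying spaces is surjective, and over each $U_i$ the standard reduction $[U_i/G] = [(G\times_{G_i}S_i)/G] \simeq [S_i/G_i]$ identifies $\phi$ locally with the natural projection $S_i \to [S_i/G_i]$ of a \emph{finite}-group quotient, whose associated groupoid $G_i \times S_i \rightrightarrows S_i$ is \etale{} since $G_i$ is discrete. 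As being an \etale{} atlas is local on the target, this exhibits $\phi$ as an \etale{} atlas.

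The computation I would actually carry out, to see the role of local freeness transparently rather than hide it inside the slice theorem, is the groupoid of $\phi$ directly. Using the description of fibred products over a quotient stack, $X_1 = X_0 \times_{[M/G]} X_0$ is the set of triples $(x,x',g)$ with $x' = g\cdot x$ (points viewed in $M$); on each pair of components it is $\Phi^{-1}(S_j)$ for the map $\Phi\colon S_i \times G \to M$, $\Phi(x,g) = g\cdot x$. Injectivity of the infinitesimal action makes the $G$-directions of $d\Phi$ span the full tangent space to the orbit, which is complementary to $T_{x'}S_j$; hence $\Phi$ is transverse to $S_j$ and $X_1$ is a manifold with $\dim X_1 = \dim(S_i\times G) - \codim_M S_j = \dim M - \dim G = \dim X_0$. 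For the source map $\src(x,g) = x$, a kernel vector $(0,w)$ satisfies $d\Phi(0,w) \in T_{x'}S_j$; but $d\Phi(0,w)$ is an orbit direction, so it lies in $T_{x'}S_j \cap T_{x'}(G\cdot x') = 0$, whence $w$ is in the kernel of the (injective) infinitesimal action and $w=0$. Thus $\src$ is an injective immersion between manifolds of equal dimension, i.e.\ a local diffeomorphism, and by symmetry so is $\tgt$.

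What then remains is routine: confirming that $\phi$ is genuinely an atlas (representability of $X_0 \times_{[M/G]} Y$ and the surjective-submersion property of its projection to $Y$ for every $Y \to [M/G]$), which follows by comparing $\phi$ with the known atlas $p$ over each $U_i$ via the tubular embedding. I expect the only real obstacle to be the \etale{} step, and the two paragraphs above isolate exactly where the hypotheses are spent: everything hinges on local freeness through the injectivity of $\xi \mapsto \xi_M(m)$, while properness enters only to guarantee that stabilisers are finite and that the slice theorem applies.
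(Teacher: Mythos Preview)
Your argument is correct. The paper does not prove this proposition at all: it simply cites Theorem~1 of Crainic--Moerdijk, which characterises Lie groupoids that are Morita equivalent to \etale{} ones (foliation groupoids, \ie those with discrete isotropy). The action groupoid $G\times M\rightrightarrows M$ of a proper locally free action has discrete (indeed finite) isotropy, so that theorem applies and yields an \etale{} presentation. Your route is more direct: you build the \etale{} atlas by hand from slices, and the heart of the matter is your transversality and dimension count for $X_1$ together with the kernel computation showing that $\src$ is a local diffeomorphism. Conceptually the two approaches meet, since Crainic--Moerdijk's proof also proceeds by choosing complete transversals to the orbit foliation; what you gain is self-containment (only the slice theorem for proper actions is assumed), and you make visible exactly where local freeness is spent, namely injectivity of $\xi\mapsto\xi_M(m)$. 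One small quibble with your closing summary: properness is also what makes the stack separated (via properness of $(\src,\tgt)$ on the action groupoid), not merely what feeds the slice theorem and finiteness of stabilisers; you acknowledge this at the outset, so this is only a matter of phrasing in the final sentence.
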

This theorem can be showed by using Theorem 1 in Crainic--Moerdijk \cite{crainic01:_foliat} (cf. Lerman--Malkin {\cite[Theorem 2.4]{0908.0903}}). 

\subsection{Differential forms over a differentiable stack}
\label{subsec:forms_on_stacks}

A (global) differential form over a differentiable stack is defined as a global section of the sheaf of differential forms.  A presheaf is usually defined as a contravariant functor and a sheaf can be defined over a category equipped with a Grothendieck topology. The Grothendieck topology on a stack $\X$ can be induced by the Grothendieck topology of $\Diff$. Moreover for a sheaf over a differentiable stack $\X$, we can define the set of global sections. Details can be found in Behrend--Xu \cite{behrend08:_differ_stack_gerbes} and Metzler \cite{metzler03:_topol_smoot_stack}.


We define the sheaf $\Omega^k_\X$ of smooth $k$-forms on a differentiable stack $\X$ as follows. For an object $x \in \X$ lying over $U$, put $\Omega^k_\X(x) = \Omega^k(U)$. For an arrow $a:y \to x$ in $\X$ with $F_\X(a) = f:V \to U$, we assign to $\Omega^k_\X(x) \to \Omega^k_\X(y)$ the pullback $f^*:\Omega^k(U) \to \Omega^k(V)$. If $X_1 \rightrightarrows X_0$ is the groupoid associated to an atlas $p: X_0 \to \X$, then the set of global sections of $\Omega_\X^k$ is given by
\[
 \Omega^k(\X) = 
 \bigl\{ \eta \in \Omega^k(X_0) \!\ \big|\!\ \src^*\eta = \tgt^*\eta \bigr\}.
\]
An element of $\Omega^k(\X)$ is called a (global) $k$-form on $\X$. 

\begin{examples}
 Suppose that a manifold $M$ is equipped with a smooth proper action of a Lie group $G$. Then the set of global $k$-forms on the quotient stack $[M/G]$ is given by
 \[
 \Omega^k([M/G]) = \bigl\{
 \eta \in \Omega^k(M) \big| \text{$\eta$ is $G$-invariant and $\iota(\xi_M)\eta = 0$ for any $\xi \in \g$}\bigr\}.
 \]
 Here $\iota$ is the interior multiplication, $\g$ is the Lie algebra of the Lie group $G$ and $\xi_M$ is the infinitesimal action of $\xi$, i.e.\! $\xi_M(x) = \left.\frac{d}{d\lambda}\right|_{\lambda=0} \exp(\lambda\xi) \cdot x$.
\end{examples}

\subsection{Vector fields and symplectic forms over a Deligne--Mumford stack}
\label{subsec:vector_fields_on_stacks}

Let $p:X_0 \to \X$ be an atlas and $X_1 \rightrightarrows X_0$ the groupoids associated to the atlas. We define the groupoid of vector fields $\Vect(X_1 \rightrightarrows X_0)$ over $X_1 \rightrightarrows X_0$ as follows.

Note that we have the Lie groupoid $TX_1 \rightrightarrows TX_0$ whose structure maps are defined by derivatives of the structure maps of $X_1 \rightrightarrows X_0$. The projections define a smooth functor $\pi$ from $TX_1 \rightrightarrows TX_0$ to $X_1 \rightrightarrows X_0$. The set of objects $\Vect(X_1 \rightrightarrows X_0)$ consists of smooth functors $v$ from $X_1 \rightrightarrows X_0$ to $TX_1 \rightrightarrows TX_0$ satisfying that the composition $\pi \circ v$ is equal to the identity functor. An arrow $v \to v'$ is a natural isomorphism $\alpha:v \to v'$ satisfying that the horizontal composition $\id_{\pi}*\alpha$ is equal to the identity transformation of the identity functor for $X_1 \rightrightarrows X_0$.

\begin{thm}
 [Hepworth {\cite[Theorem 3.13]{hepworth09:_vector}}] The groupoid $\Vect(X_1 \rightrightarrows X_0)$ is independent of the choice of the atlas up to category equivalences.
\end{thm}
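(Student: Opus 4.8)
The plan is to prove the stronger statement that $\Vect(X_1 \rightrightarrows X_0)$ depends only on the Morita equivalence class of the groupoid $X_1 \rightrightarrows X_0$, and then to invoke the fact that any two atlases of $\X$ present Morita equivalent groupoids. For the latter, given a second atlas $q: Y_0 \to \X$ with associated groupoid $Y_1 \rightrightarrows Y_0$, I would form the fibred product $Z_0 := X_0 \times_{\X} Y_0$. Since $p$ is an atlas this is representable, and the two projections $Z_0 \to X_0$ and $Z_0 \to Y_0$ are surjective submersions (indeed local diffeomorphisms in the Deligne--Mumford case). The composite $Z_0 \to \X$ is again an atlas, and the projections upgrade to smooth functors $\Phi: (Z_1 \rightrightarrows Z_0) \to (X_1 \rightrightarrows X_0)$ and $\Psi: (Z_1 \rightrightarrows Z_0) \to (Y_1 \rightrightarrows Y_0)$, where $Z_1 = Z_0 \times_{\X} Z_0$. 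Using the Cartesian description $X_1 = X_0 \times_{\X} X_0$ one verifies that $\Phi$ (and likewise $\Psi$) is an essential equivalence: it is fully faithful in the sense that the square with corners $Z_1,\ X_1,\ Z_0 \times Z_0,\ X_0 \times X_0$ is Cartesian, and essentially surjective in the sense that $\tgt \circ \mathrm{pr}_1 : X_1 \times_{X_0} Z_0 \to X_0$ is a surjective submersion. Thus it suffices to show that an essential equivalence induces an equivalence of $\Vect$-groupoids.

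Next I would construct the comparison functor. Working in the Deligne--Mumford case, the essential equivalence $\Phi$ is a local diffeomorphism on both objects and arrows, so the induced functor $T\Phi: (TZ_1 \rightrightarrows TZ_0) \to (TX_1 \rightrightarrows TX_0)$ is fibrewise a linear isomorphism. This lets me define a pullback $\Phi^*: \Vect(X_1 \rightrightarrows X_0) \to \Vect(Z_1 \rightrightarrows Z_0)$: for a vector field $v$, i.e.\! a functor splitting $\pi$, set $(\Phi^* v)(z) := (T\Phi)^{-1} v(\Phi(z))$ on objects and arrows, where the local inverse of $T\Phi$ makes sense because $\Phi$ is \etale{}. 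One checks that $\Phi^* v$ is again a smooth functor splitting the projection of $Z_1 \rightrightarrows Z_0$, and that a natural isomorphism $\alpha: v \to v'$ satisfying the condition $\id_\pi * \alpha = \id$ pulls back to one with the same property; hence $\Phi^*$ is a functor of groupoids.

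Finally I would show $\Phi^*$ is an equivalence of categories. Faithfulness and fullness should follow from the fully faithful (Cartesian) half of the essential equivalence: natural transformations of vector fields are encoded by sections valued in the arrow manifold $TX_1$, and the Cartesian square identifies arrows of $Z_1 \rightrightarrows Z_0$ with arrows of $X_1 \rightrightarrows X_0$ over matching endpoints, giving a bijection on $\Vect$-arrows. The substantive point is essential surjectivity: given a vector field $w$ on $Z_1 \rightrightarrows Z_0$, I must produce a vector field on $X_1 \rightrightarrows X_0$ whose pullback is isomorphic to $w$. Because $\Phi$ is not injective on objects, $w$ descends only after checking that its values over the several preimages of a point of $X_0$ agree after transport along the arrows of the groupoid; the surjective-submersion half of the essential equivalence provides local sections of $\Phi$ along which to push $w$ forward, and the natural-transformation arrows of $\Vect$ are exactly what glue the resulting local vector fields into a global, well-defined functor on $X_1 \rightrightarrows X_0$. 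I expect this descent step to be the main obstacle, since it requires simultaneously verifying smoothness, independence of the chosen local section, and compatibility with $\src$, $\tgt$ and multiplication of the descended field; the conceptual reason it works is that $\Vect(X_1 \rightrightarrows X_0)$ is equivalent to the groupoid of sections of the tangent stack $T\X \to \X$, which is manifestly independent of the atlas.
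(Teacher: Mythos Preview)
The paper does not prove this theorem; it simply cites Hepworth and moves on. So there is no ``paper's proof'' to compare against, and what follows is an assessment of your sketch on its own terms and against Hepworth's actual argument.

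Your global strategy---reduce to Morita invariance by passing through a common refinement $Z_0 = X_0 \times_{\X} Y_0$ and then show that an essential equivalence of Lie groupoids induces an equivalence of $\Vect$-groupoids---is the right one, and it is exactly the shape of Hepworth's proof. The concluding parenthetical, that the conceptual reason is the identification of $\Vect(X_1 \rightrightarrows X_0)$ with sections of $T\X \to \X$, is in fact the route Hepworth takes: he proves that equivalence first, and Morita invariance of $\Vect$ is then immediate because $T\X \to \X$ is intrinsic to $\X$.

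There is, however, a genuine gap in your direct construction of the comparison functor. You write ``working in the Deligne--Mumford case, the essential equivalence $\Phi$ is a local diffeomorphism on both objects and arrows'' and then define $\Phi^* v$ by inverting $T\Phi$ pointwise. But the theorem, as stated here and in Hepworth, concerns arbitrary atlases, not \etale{} ones; a stack being Deligne--Mumford only asserts the \emph{existence} of an \etale{} atlas, not that every atlas is \etale{}. If $p$ and $q$ are merely submersion atlases, the projections $Z_0 \to X_0$ and $Z_0 \to Y_0$ are surjective submersions, not local diffeomorphisms, and $(T\Phi)^{-1}$ makes no sense. So your $\Phi^*$ is only defined when both atlases are \etale{}, which is strictly weaker than what is claimed. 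To repair this you would need a genuinely different construction of the comparison functor---for a submersion one cannot simply lift a vector field along $\Phi$, and this is precisely why Hepworth routes the argument through the stacky tangent bundle rather than attempting a direct transport of groupoid vector fields.
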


The vector space of the equivalence classes for the groupoid $\Vect(X_1 \rightrightarrows X_0)$ is denoted by $\Vect(\X)$ and called the space of vector fields over $\X$. 

\begin{thm}
 [Lerman--Malkin {\cite[Proposition 2.9]{0908.0903}}]
 Let $\X$ be a stack with an atlas $p:X_0 \to \X$ and $X_1 \rightrightarrows X$ the groupoid associated to the atlas. If $\X$ is Deligne--Mumford, then $\Vect(\X)$ is given by the following quotient vector space:
 \[
 \mathcal{V}\big/\{(v_1,v_0) \in \mathcal{V}\!\ |\!\ v_1 \in \ker(d\src) + \ker(d\tgt)\}.
 \]
 Here $\mathcal{V}$ is the vector space consisting of pairs $(v_1,v_0) \in \Vect(X_1) \times \Vect(X_0)$ of (ordinary) vector fields satisfying $d(\src) \circ v_1 = v_0 \circ \src$ and $d(\tgt) \circ v_1 = v_0 \circ \tgt$.
\end{thm}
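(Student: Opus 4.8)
The plan is to produce an explicit linear isomorphism $\Vect(\X)\cong\mathcal{V}/W$, where $W=\{(v_1,v_0)\in\mathcal{V}\mid v_1\in\ker(d\src)+\ker(d\tgt)\}$, by descending everything to the level of component vector fields. First I would observe that an object $v$ of $\Vect(X_1\rightrightarrows X_0)$, being a smooth functor with $\pi\circ v=\id$, is the same datum as its object part $v_0:=v|_{X_0}$ and arrow part $v_1:=v|_{X_1}$: the section condition makes $v_0\in\Vect(X_0)$ and $v_1\in\Vect(X_1)$ genuine vector fields, and functoriality is precisely compatibility with the structure maps, namely $d\src\circ v_1=v_0\circ\src$ and $d\tgt\circ v_1=v_0\circ\tgt$ (so that $(v_1,v_0)\in\mathcal{V}$), together with multiplicativity of $v_1$, i.e.\ preservation of composition, which in turn forces preservation of units. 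Thus $v\mapsto(v_1,v_0)$ is an injective linear map onto the subspace $\mathcal{V}^{\mathrm{mult}}\subseteq\mathcal{V}$ of multiplicative pairs, and the whole statement reduces to showing that this identification induces a linear isomorphism from $\mathcal{V}^{\mathrm{mult}}/\!\sim$ onto $\mathcal{V}/W$, where $\sim$ is the relation generated by the arrows of $\Vect(X_1\rightrightarrows X_0)$.

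Next I would unwind an arrow $\alpha\colon v\Rightarrow v'$ with $\id_\pi*\alpha=\id$ into a smooth map $\alpha:X_0\to TX_1$ lying over the unit section (equivalently $\pi_1\circ\alpha=\unit$) with $d\src\circ\alpha=v_0$ and $d\tgt\circ\alpha=v'_0$. To see that $\sim$ implies congruence modulo $W$, I would fix an arrow $a:x\to y$ and use the Lie-groupoid splitting $T_{\unit(x)}X_1=d\unit(T_xX_0)\oplus\ker(d\tgt)$ to write $\alpha(x)=d\unit(v'_0(x))+\alpha(x)^{t}$ with $\alpha(x)^{t}\in\ker(d\tgt)$. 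Left-translating $\alpha(x)^{t}$ by $a$ keeps it in $\ker(d\tgt)_a$ while preserving its image $v_0(x)-v'_0(x)$ under $d\src$; since $v_1(a)-v'_1(a)$ has exactly that image under $d\src$, their difference lies in $\ker(d\src)_a$. Hence $v_1(a)-v'_1(a)\in\ker(d\src)_a+\ker(d\tgt)_a$, so $(v_1-v'_1,v_0-v'_0)\in W$. The construction therefore descends to a well-defined linear map $\bar\Phi\colon\Vect(\X)\to\mathcal{V}/W$ whose image is that of $\mathcal{V}^{\mathrm{mult}}$; this is the well-definedness together with the straightforward half of the argument.

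The substance, and what I expect to be the main obstacle, is the two converse constructions. Surjectivity demands that every $(v_1,v_0)\in\mathcal{V}$ be correctable by an element of $W$ to a multiplicative pair, i.e.\ $\mathcal{V}^{\mathrm{mult}}+W=\mathcal{V}$; the remaining injectivity of $\bar\Phi$ demands that whenever two objects have components differing by an element of $W$ one can actually build a natural isomorphism $\alpha$ over the unit section realising their equivalence and verify its naturality over every arrow of $X_1$. Both rest on the local structure supplied by the Deligne--Mumford hypothesis: by the preceding proposition the groupoid comes from a proper, locally free action, so a slice argument presents $X_1\rightrightarrows X_0$ locally as a finite-group action groupoid, along which $\src,\tgt$ are transverse to the orbits and $\ker(d\src)+\ker(d\tgt)$ is a subbundle of constant rank. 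In such charts $v_0$ pins down a multiplicative representative up to a $\bigl(\ker(d\src)+\ker(d\tgt)\bigr)$-valued correction, and a decomposition $v_1-v'_1=a+b$ with $a\in\ker(d\src)$, $b\in\ker(d\tgt)$ can be transported along the source and target fibres to define $\alpha$ on units. The delicate point is that these local corrections and the locally defined $\alpha$ must be assembled into global, genuinely multiplicative and genuinely natural data; here I would argue that the defining conditions are affine over the orbit space, so that averaging over the finite isotropy and patching preserve them, and I would use the atlas-independence of $\Vect(\X)$ (Hepworth's theorem) to pass freely to a convenient atlas. Granting these constructions $\bar\Phi$ is a bijection, and since every map in sight is linear it is the asserted isomorphism of vector spaces.
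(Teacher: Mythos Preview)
The paper does not prove this theorem at all: it is stated with attribution to Lerman--Malkin \cite[Proposition 2.9]{0908.0903} and immediately followed by the discussion of symplectic forms, with no intervening proof. There is therefore no ``paper's own proof'' to compare your proposal against; the result is simply imported from the cited reference.

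As a side remark on the proposal itself, the overall architecture---identifying objects of $\Vect(X_1\rightrightarrows X_0)$ with multiplicative pairs inside $\mathcal{V}$, unwinding arrows into sections of $TX_1$ over the unit, and isolating the two converse constructions as the substantive steps---is the right shape. One genuine slip: you write ``by the preceding proposition the groupoid comes from a proper, locally free action,'' but that proposition runs the other way (proper locally free actions give Deligne--Mumford stacks, not conversely). What you actually need is that for \emph{any} atlas of a Deligne--Mumford stack the associated groupoid is a foliation groupoid (proper with discrete isotropy), so that $\ker(d\src)\cap\ker(d\tgt)=0$ and $\ker(d\src)+\ker(d\tgt)$ is an honest subbundle; this is what the Crainic--Moerdijk reference supplies. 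With that in hand the correction-to-multiplicative step and the construction of $\alpha$ become linear-algebraic rather than requiring the averaging-and-patching argument you sketch, which as written is not a proof but a hope.
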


Following Lerman--Malkin \cite{0908.0903}, we introduce a symplectic form on a Deligne--Mumford stack as follows. Let $\X$ be a Deligne--Mumford stack with an atlas $p: X_0 \to \X$. A vector field over $\X$ can be represented by an equivalence class $v = [v_1,v_0]$ of pair of vector fields as above. The interior multiplication $\iota(v)$ is defined by
\[
 \iota(v):  \Omega^2(\X) \to \Omega^1(\X);\ \omega \mapsto \iota(v_0)\omega.
\]
Here $\omega$ is represented as a $2$-form on $X_0$. 

\begin{remark}
 Lerman and Malkin describe a differential form over $\X$ as a pair of differential forms $(\omega_1,\omega_0)$ such that $\omega_1 = \src^*\omega_0 = \tgt^*\omega_0$ \cite{0908.0903}. We omit $\omega_1$, because it is redundant.
\end{remark}

A $2$-form $\omega$ on $\X$ is said to be \emph{nondegenerate} if the map 
\[
 \Vect(\X) \to \Omega^1(\X);\ v \mapsto \iota(v)\omega
\]
is a linear isomorphism. This is equivalent to the condition that $\ker \omega = \A$. Here $\A$ is the Lie algebroid of the groupoid associated to the atlas $p: X_0 \to \X$. In other words $\A$ is the pullback bundle of $\ker(d\src)$ by the unit map $X_0 \to X_1$. For a Deligne--Mumford stack we can regard the bundle $\A \to X_0$ as a subbundle of the tangent bundle $TX_0$ via $d\tgt$  \cite[Theorem 2.4]{0908.0903}. A nondegenerate closed $2$-form on $\X$ is called a \emph{symplectic form} on $\X$.

\section{The Deligne--Mumford stack associated to a stacky polytope}
\label{section:DM_stack_associated_to_stacky_polytope}

Borisov et al. define stacky fans and construct the Deligne--Mumford stack $\X_{\vec\Sigma}$ associated to a stacky fan $\vec{\Sigma}$ \cite{borisov05:_chow_delig_mumfor}. Motivated by Borisov et al., we define stacky polytopes which are symplectic counterparts and construct the symplectic Deligne--Mumford stack $\X_{\vec\Delta}$ associated to a stacky polytope $\vec{\Delta}$.

\subsection{A symplectic quotient as a stack} \label{sec:symp_toric_quot}

In this subsection we discuss a construction of symplectic quotients in terms of stacks. This is well-known as construction of symplectic orbifolds. 

\begin{remark}
 Lerman and Malkin construct symplectic Deligne--Mumford stacks in a different way to our construction \cite{0908.0903}. But we stick with the standard construction because it is useful when we find a stacky polytope for a given symplectic quotient in Subsection \ref{subsection:stacky_polytope_of_torus_quotients}.
\end{remark}

The $d$-dimensional torus $\T^d = \R^d/\Z^d$ linearly acts on $\C^d$:
\[
 \T^d \acts \C^d; \quad
 [\theta_1,\dots,\theta_d] \cdot (z_1,\dots,z_d) = 
 (e^{-2\pi\i \theta_1}z_1,\dots,e^{-2\pi\i \theta_d}z_d). 
\]
Here $\i=\sqrt{-1}$. The action is Hamiltonian with respect to the standard symplectic structure on $\C^d$. A moment map of the action is given by 
\begin{equation}
 \label{eq:mu_zero}
 \mu_0 :  \C^d \to (\R^d)^\dual; \quad
 z \mapsto \pi\sum_{\alpha=1}^d |z_\alpha|^2\vec{e}^\alpha. 
\end{equation}
Here ``$\dual$'' means taking the dual space and $\vec{e}^1,\dots,\vec{e}^d$ are the dual basis of the standard basis $\vec{e}_1,\dots,\vec{e}_d$ of $\R^d$.

Let $G$ be a compact Lie group whose adjoint representation is trivial. It is easy to see that the identity component $G_0$ of $G$ is a compact torus. Given a homomorphism $\rho: G \to \T^d$, we define the smooth action of $G$ on $\C^d$ through the homomorphism. Let $\rho^\dual: (\R^d)^\dual \to \g^\dual$ be the induced linear map, where $\g$ is the Lie algebra of $G$. If we put $w^\alpha = \rho^\dual(\vec{e}^\alpha)$, then a moment map of the $G$-action is given by 
\begin{equation}
 \label{eq:moment_map_of_G}
 \mu :  \C^d \to \g^\dual; \quad
 z \mapsto \pi\sum_{\alpha=1}^d |z_\alpha|^2 w^\alpha. 
\end{equation}
The elements $w^1,\dots,w^d$ are called \emph{weights}.

Since the moment map is $G$-equivariant, for $\tau \in \g^\dual$ the level set $\mu^{-1}(\tau)$ is closed under the continuous action of $G$. If $\tau$ is a regular value of $\mu$, the level set $\mu^{-1}(\tau)$ is a smooth manifold. Moreover it follows from the next lemma that the $G$-action on $\mu^{-1}(\tau)$ is locally free.

\begin{lem}
 \label{lem:reg-val-and-local-freeness}
 For $\tau \in \g^\dual$, $\tau$ is a regular value of $\mu$ if and only if for $z \in \mu^{-1}(\tau)$ and $\xi \in \g$ the identity $\xi_{\C^d}(z)=0$ implies $\xi=0$.
\end{lem}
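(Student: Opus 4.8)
The plan is to reduce the statement to the defining property of a moment map together with the nondegeneracy of the standard symplectic form $\omega_0$ on $\C^d$. By definition, $\tau$ is a regular value of $\mu$ precisely when the differential $d\mu_z \colon T_z\C^d \to \g^\dual$ is surjective for every $z \in \lm$. So I would fix such a $z$ and characterise surjectivity of $d\mu_z$ in terms of the infinitesimal action $\xi \mapsto \xi_{\C^d}(z)$; the two conditions in the lemma will then match up pointwise.

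First I would recall that, since $\mu$ is a moment map for the $G$-action, it satisfies $d\pair{\mu,\xi} = \iota(\xi_{\C^d})\omega_0$ for every $\xi \in \g$. Pairing the differential $d\mu_z$ with a fixed $\xi \in \g$ and evaluating at a tangent vector $v \in T_z\C^d$ therefore gives the identity
\[
 \pair{d\mu_z(v),\,\xi} = \omega_0\bigl(\xi_{\C^d}(z),\,v\bigr).
\]
This single formula is the whole engine of the proof: it converts a question about the image of $d\mu_z$ in $\g^\dual$ into a question about the vector $\xi_{\C^d}(z)$ and the symplectic pairing.

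Next I would invoke the standard linear-algebra duality: the map $d\mu_z$ is surjective if and only if the annihilator of its image inside $\g = (\g^\dual)^\dual$ is zero, equivalently if and only if its dual map is injective. By the displayed identity, an element $\xi \in \g$ annihilates the image of $d\mu_z$ exactly when $\omega_0(\xi_{\C^d}(z),v) = 0$ for all $v \in T_z\C^d$. Since $\omega_0$ is nondegenerate, this holds if and only if $\xi_{\C^d}(z) = 0$. Hence $d\mu_z$ is surjective if and only if the only $\xi \in \g$ with $\xi_{\C^d}(z)=0$ is $\xi = 0$; running this equivalence over all $z \in \lm$ yields the claimed statement in both directions.

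The argument is essentially formal once the moment-map identity is in place, so I do not expect a serious obstacle. The only point requiring genuine care is the bookkeeping of the duality between $\g$ and $\g^\dual$: one must correctly identify the cokernel of $d\mu_z$ with the annihilator of its image in $\g$, and then transport the nondegeneracy of $\omega_0$ through the pairing, so that surjectivity of $d\mu_z$ is matched \emph{exactly} with the vanishing condition on the infinitesimal generators rather than merely implied by it.
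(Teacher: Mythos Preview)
Your proof is correct and follows essentially the same route as the paper's: both reduce regularity of $\tau$ to surjectivity of $d\mu_z$, convert surjectivity into injectivity of the dual via the annihilator, and then use the moment-map identity $\pair{d\mu_z(\cdot),\xi}=\iota(\xi_{\C^d})\omega_0|_z$ together with the nondegeneracy of $\omega_0$. The paper's proof is simply a terser statement of the same argument.
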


\begin{proof}
A covector $\tau \in \g^\dual$ is a regular value of $\mu$ if and only if for any $z \in \mu^{-1}(\tau)$ the derivative $d\mu(z): \C^d \to \g^\dual$ is surjective. The map $d\mu(z)$ is surjective if and only if $\pair{d\mu(z),\xi}|_z=0$ implies $\xi=0$  for $\xi \in \g$. Since $\pair{d\mu(z),\xi}|_z = \iota(\xi_{\C^d})\omega_0|_z$, we can conclude the lemma.
\end{proof}

If $\tau \in \g^\dual$ is a regular value of $\mu$ and $\lm$ is not empty, then the $G$-action on the level manifold $\mu^{-1}(\tau)$ is proper and locally free. Therefore the quotient stack $\C^d\sq{\tau}G = [\mu^{-1}(\tau)/G]$ is a Deligne--Mumford stack. 

The restriction $\omega$ of the standard symplectic form $\omega_0$ on $\C^d$ to the level manifold $\mu^{-1}(\tau)$ is $G$-invariant. Since $\ker\omega$ coincides with the Lie algebroid of the action groupoid $G \times \lm \rightrightarrows \lm$ \cite[Proposition 5.40]{mcduff98:_introd}, $\omega$ is a symplectic form on the stack $\C^d \sq{\tau} G$. We call the quotient stack $\C^d \sq{\tau} G$ the \emph{symplectic quotient}.

The moment map $\mu: \C^d\to\g^\dual$ is often assumed to be proper because this assumption guarantees that the symplectic quotient is compact. 
\begin{lem}
 [Guillemin--Ginzburg--Karshon {\cite[Proposition 4.14]{guillemin02:_momen_hamil}}]
 \label{lem:properness} 
 The moment map $\mu: \C^d \to \g^\dual$ is proper if and only if there exists a covector $\tau = \sum_{\alpha=1}^d s_\alpha w^\alpha$ with $s_\alpha \geq 0\ (\alpha=1,\dots,d)$ such that 
$\{ s \in (\R^d)^\dual | \pair{s,\vec{e}_\alpha} \geq 0,\ \rho^\dual(s)=\tau \} $ is compact.
\end{lem}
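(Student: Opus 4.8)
The plan is to exploit the factorisation $\mu = \rho^\dual \circ \mu_0$, which holds because $w^\alpha = \rho^\dual(\vec{e}^\alpha)$ and hence $\mu(z) = \rho^\dual\bigl(\pi\sum_\alpha |z_\alpha|^2 \vec{e}^\alpha\bigr) = \rho^\dual(\mu_0(z))$. I would first record two elementary facts about $\mu_0$. Its image is exactly the closed positive orthant $\Lambda := \{ s \in (\R^d)^\dual \mid \pair{s,\vec{e}_\alpha} \geq 0 \text{ for all } \alpha \}$, since $\pair{\mu_0(z),\vec{e}_\alpha} = \pi|z_\alpha|^2$ sweeps out all nonnegative values; and $\mu_0$ is itself proper, because the preimage of a compact set is closed and, via the same identity, bounded. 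Consequently, for any compact $K \subset \g^\dual$ one has $\mu^{-1}(K) = \mu_0^{-1}\bigl((\rho^\dual)^{-1}(K) \cap \Lambda\bigr)$, and since $\mu_0$ is proper, $\mu$ is proper if and only if $(\rho^\dual)^{-1}(K) \cap \Lambda$ is compact for every compact $K$.

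The core of the argument is then a purely convex-geometric claim: this last condition is equivalent to $\ker(\rho^\dual) \cap \Lambda = \{0\}$. For the substantive direction I would argue by normalisation. If $(\rho^\dual)^{-1}(K) \cap \Lambda$ were unbounded for some compact $K$, pick $s_n$ in it with $|s_n| \to \infty$; passing to a subsequence, $s_n/|s_n| \to v$ with $|v| = 1$ and $v \in \Lambda$ (as $\Lambda$ is a closed cone), while $\rho^\dual(v) = \lim_n \rho^\dual(s_n)/|s_n| = 0$ because $\rho^\dual(s_n) \in K$ stays bounded. This exhibits a nonzero element of $\ker(\rho^\dual) \cap \Lambda$, a contradiction. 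Conversely, if $\ker(\rho^\dual) \cap \Lambda \neq \{0\}$, any nonzero $v$ in it lets one march to infinity inside a single fibre, violating boundedness.

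Finally I would match this intrinsic criterion with the stated one. The set appearing in the lemma is precisely the polyhedron $P_\tau := (\rho^\dual)^{-1}(\tau) \cap \Lambda$, whose recession cone is $\ker(\rho^\dual) \cap \Lambda$ independently of $\tau$ (by the very same normalisation argument applied inside the fibre); hence, \emph{when nonempty}, $P_\tau$ is compact exactly when $\ker(\rho^\dual) \cap \Lambda = \{0\}$. The hypothesis that $\tau = \sum_\alpha s_\alpha w^\alpha$ with $s_\alpha \geq 0$ is nothing but the requirement $\tau \in \rho^\dual(\Lambda)$, the image of $\mu$, and it guarantees nonemptiness since $\sum_\alpha s_\alpha \vec{e}^\alpha \in P_\tau$. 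Thus the existence of such a $\tau$ with $P_\tau$ compact is equivalent to $\ker(\rho^\dual) \cap \Lambda = \{0\}$, and therefore to properness of $\mu$.

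I expect the main obstacle to be the bookkeeping around nonemptiness rather than any analytic difficulty: compactness of $P_\tau$ in isolation is too weak, because the empty set is compact, so the role of the sign condition $s_\alpha \geq 0$ in forcing $P_\tau \neq \emptyset$ must be made explicit, and one must verify that the recession cone of $P_\tau$ is genuinely independent of $\tau$. The analytic ingredients — properness of $\mu_0$ and the normalisation limit — are routine by comparison.
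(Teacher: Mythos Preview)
The paper does not actually prove this lemma: it is stated with attribution to Guillemin--Ginzburg--Karshon \cite[Proposition 4.14]{guillemin02:_momen_hamil} and then used without further argument. So there is no in-paper proof to compare against.

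Your argument is correct and self-contained. The factorisation $\mu = \rho^\dual \circ \mu_0$, the properness of $\mu_0$, and the reduction to the convex-geometric criterion $\ker(\rho^\dual) \cap \Lambda = \{0\}$ are all sound; the recession-cone computation for $P_\tau$ is standard for polyhedra and indeed independent of $\tau$ when $P_\tau$ is nonempty. You handle the nonemptiness issue correctly by noting that the sign hypothesis on the $s_\alpha$ is exactly what forces $P_\tau \neq \varnothing$, and for the forward direction you can simply take $\tau = 0$, for which $P_0 = \ker(\rho^\dual) \cap \Lambda = \{0\}$ is compact.

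One cosmetic point: the paper already uses the symbol $\Lambda$ for the image of $N$ in $\t$ (Definition~\ref{dfn:stacky_polytopes}), so your use of $\Lambda$ for the nonnegative orthant in $(\R^d)^\dual$ clashes with that. If this proof is to be inserted into the paper, pick a different letter for the orthant.
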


This subsection can be summarised as the following theorem.
\begin{thm}
 \label{thm:symp_quot_is_symp_DM_stack}
 Let $(G,\rho,\tau)$ consist of 
 \begin{itemize}
  \item \squashup
	a compact Lie group $G$ whose adjoint representation is trivial,
  \item \squashup
	a homomorphism $\rho: G \to \T^d$ of Lie groups, and
  \item \squashup
	a regular value $\tau \in \g^\dual$ of the moment map \eqref{eq:moment_map_of_G}.
 \end{itemize}
 We assume that the triple satisfies the following conditions.
 \begin{enumerate}
  \item The level set $\lm$ is nonempty.
  \item The moment map $\mu: \C^d\to\g^\dual$ is proper. 
 \end{enumerate}
Then the quotient stack $\C^d\sq{\tau}G = [\lm/G]$ is a compact symplectic Deligne--Mumford stack. 
\end{thm}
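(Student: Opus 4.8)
The plan is to establish the three assertions---Deligne--Mumford, symplectic, and compact---by collecting the facts assembled above, since the triple $(G,\rho,\tau)$ supplies exactly the hypotheses each one needs.

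First I would treat the Deligne--Mumford property. Because $\tau$ is a regular value of $\mu$, the level set $\lm$ is a smooth submanifold of $\C^d$, and by Lemma \ref{lem:reg-val-and-local-freeness} every $z \in \lm$ has trivial infinitesimal stabiliser, so the $G$-action on $\lm$ is locally free. Since $G$ is compact, its action on $\lm$ is automatically proper. Hence the proposition on proper, locally free actions provides an \etale{} atlas of $[\lm/G]$, so $\C^d \sq{\tau} G$ is Deligne--Mumford.

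Next I would exhibit the symplectic form. Let $\omega$ be the pullback of the standard symplectic form $\omega_0$ to $\lm$. As $\omega_0$ is $G$-invariant and $\iota(\xi_{\C^d})\omega_0$ annihilates $T\lm$ for every $\xi \in \g$ (this is what it means for $\lm$ to be a moment-map level set), $\omega$ represents a global $2$-form on the stack in the sense of the description of $\Omega^2([\lm/G])$ recorded above. The essential point is nondegeneracy, which by the criterion recalled above is the identity $\ker\omega = \A$ for the Lie algebroid $\A$ of the action groupoid $G \times \lm \rightrightarrows \lm$; this is the symplectic-reduction fact of \cite[Proposition 5.40]{mcduff98:_introd}. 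Closedness of $\omega$ is inherited from $\omega_0$, so $\omega$ is a symplectic form on $\C^d \sq{\tau} G$.

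Finally, for compactness I would use assumption (2): since $\mu$ is proper, the fibre $\lm = \mu^{-1}(\tau)$ over the single point $\tau$ is compact, whence the underlying space of $[\lm/G]$, namely the quotient $\lm/G$, is a continuous image of $\lm$ and therefore compact. I expect the only substantive step to be the nondegeneracy identity $\ker\omega = \A$: this is precisely where symplectic reduction enters and where the stacky statement goes beyond the purely topological one, whereas the Deligne--Mumford and compactness parts follow routinely from regularity of $\tau$, compactness of $G$, and properness of $\mu$.
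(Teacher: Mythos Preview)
Your proposal is correct and matches the paper's argument essentially step for step: the theorem is stated in the paper as a summary of the subsection, and your three paragraphs reproduce precisely that content---Lemma \ref{lem:reg-val-and-local-freeness} plus compactness of $G$ for the Deligne--Mumford part, the identity $\ker\omega = \A$ via \cite[Proposition 5.40]{mcduff98:_introd} for the symplectic form, and properness of $\mu$ for compactness.
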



\subsection{A stacky polytope}
\label{subsection:stacky_polytope}

In this subsection we define a stacky polytope and construct a symplectic Deligne--Mumford stack from a stacky polytope.

\begin{dfn}
 \label{dfn:stacky_polytopes}
 Consider a triple $(N,\Delta,\beta)$ of 
 \begin{itemize}
  \item \squashup
	a finitely generated $\Z$-module $N$ of rank $r$,
  \item \squashup
	a polytope $\Delta$ with $d$ facets $F_1,\dots,F_d$ in $\t^\dual=N^\dual\otimes_\Z\R$, and
  \item \squashup
	a homomorphism of $\Z$-modules $\beta :  \Z^d \to N$.
 \end{itemize}
 We call the triple $(N,\Delta,\beta)$ a \emph{stacky polytope} if the following conditions are satisfied.
 \begin{enumerate}
  \item \squashup
	The polytope $\Delta$ is simple, i.e.\ every vertex is contained in exactly $r$ facets.
  \item \squashup
	Let $e_1,\dots, e_d$ be the standard $\Z$-basis of $\Z^d$. The natural map $N \to \t = N \otimes_\Z \R$ is denoted by $n \mapsto \overline{n}$. Let $\Lambda$ be the image of $N$ via the natural map. Then $\overline{\beta(e_1)},\dots, \overline{\beta(e_d)} \in \Lambda$ are vectors perpendicular to the facets $F_1,\dots, F_d$ in inward-pointing way, respectively.
  \item \squashup
	The cokernel of the homomorphism $\beta: \Z^d \to N$ is finite. 
 \end{enumerate}
\end{dfn}

The second condition implies that the polytope $\Delta$ is rational and can be described as
\begin{equation}
 \label{eqn:polytope-as-intersection-of-hyperplanes}
 \Delta = \left\{ \eta \in \t^\dual \!\ \middle| \!\ \pair{\eta,\overline{\beta(e_\alpha)}} \geq -c_\alpha \right\} 
\end{equation}
for some $c_\alpha \in \R\ (\alpha = 1,\dots,d)$.

We construct the symplectic Deligne--Mumford stack $\X_{\vec\Delta}$ associated to a stacky polytope $\vec{\Delta}=(N,\Delta,\beta)$ as follows. First we construct a homomorphism of $\Z$-modules $\beta^\DG:(\Z^d)^\dual \to \DG(\beta)$ in the same way to Borisov et al. \cite{borisov05:_chow_delig_mumfor}. 

\begin{remark}
 In Borisov--Chen--Smith \cite{borisov05:_chow_delig_mumfor}, the homomorphism $(\Z^d)^\dual \to \DG(\beta)$ is denoted by $\beta^\dual$ instead of $\beta^\DG$. In this paper ``${}^\dual$'' always means ``dual'' in the usual sense. Therefore $\beta^\dual$ is the induced homomorphism $N^\dual \to (\Z^d)^\dual$ by $\beta: \Z^d \to N$, where $(\Z^d)^\dual = \Hom_\Z(\Z^d,\Z)$ and $N^\dual = \Hom_\Z(N,\Z)$.
\end{remark}

Take a projective resolution of $N$, i.e.\! an exact sequence of $\Z$-modules
\[
 \xymatrix{
 \cdots \ar[r]^{\rd_{\vec{F}}} &
 F_2 \ar[r]^{\rd_{\vec{F}}} &
 F_1 \ar[r]^{\rd_{\vec{F}}} &
 F_0 \ar[r] &
 N \ar[r] &
 0
 }
\]
with all the $F_i$'s free over $\Z$. We also take a projective resolution of $\Z^d$
\[
 \xymatrix{
 \cdots \ar[r]^{\rd_{\vec{E}}} &
 E_2 \ar[r]^{\rd_{\vec{E}}} &
 E_1 \ar[r]^{\rd_{\vec{E}}} &
 E_0 \ar[r] &
 \Z^d \ar[r] &
 0.
 }
\]
Then the homomorphism $\beta: \Z^d \to N$ lifts to a chain map $\beta: \vec{E}\to\vec{F}$ \cite[Theorem 2.2.6]{weibel94}, where $\vec{F}=\{F_i, \rd_{\vec{F}}\}_{i \geq 0}$ and $\vec{E} = \{E_i, \rd_{\vec{E}}\}_{i \geq 0}$. The mapping cone $\Cone(\beta)$ is defined as a chain complex $\{E_{i-1} \oplus F_{i}, \rd_{\Cone(\beta)} \}_{i \geq 0}$ whose boundary operator $\rd_{\Cone(\beta)}$ is given by
\[
 \rd_{\Cone(\beta)} :  E_{i-1} \oplus F_i \to E_{i-2} \oplus F_{i-1};\
 (e,f) \mapsto \bigl(-\rd_{\vec{E}}(e),\rd_{\vec{F}}(f)-\beta(e)\bigr).
\]
Then the mapping cone naturally fits into a short exact sequence of chain complexes:
\[
 \xymatrix{
 0 \ar[r] & \vec{F} \ar[r] & \Cone(\beta) \ar[r] & \vec{E}[1] \ar[r] & 0,
 }
\]
where $\vec{E}[1]$ is the chain complex whose $i$-th term is $E_{i+1}$. The dual sequence
\[
 \xymatrix{ 
 0 \ar[r] & 
 \vec{E}[1]^\dual \ar[r] & 
 \Cone(\beta)^\dual \ar[r] & 
 \vec{F}^\dual \ar[r] & 
 0
 }
\]
is a short exact sequence of cochain complexes. It induces a long exact sequence that contains the following part:
\begin{equation}
 \label{eqn:part_of_the_long_exact_sequence_containing_cohomology_of_Cone}
 \xymatrix@C=16pt{
  0 \ar[r] &
  N^\dual \ar[r]^{\beta^\dual} &
  (\Z^d)^\dual \ar[r] &
  \H^1\bigl(\Cone(\beta)^\dual\bigr) \ar[r] &
  \Ext_\Z^1(N,\Z) \ar[r] &
  0.
 }
\end{equation}
Note that the finiteness of $\coker(\beta)$ makes $\beta^\dual$ injective. Denote $\H^1\bigl(\Cone(\beta)^\dual\bigr)$ by $\DG(\beta)$ and define $\beta^\DG: (\Z^d)^\dual\to\DG(\beta)$ as the second homomorphism in the above sequence. Both $\DG(\beta)$ and $\beta^\DG$ are well-defined up to natural isomorphism \cite{borisov05:_chow_delig_mumfor}.

To construct the stack $\X_{\vec\Delta}$ as a symplectic quotient, we give a triple $(G,\rho,\tau)$ satisfying the assumptions of Theorem \ref{thm:symp_quot_is_symp_DM_stack}.

Since $N^\dual$ is naturally isomorphic to $\Lambda^\dual$, the following exact sequence forms part of the exact sequence \eqref{eqn:part_of_the_long_exact_sequence_containing_cohomology_of_Cone}:
\begin{equation}
 \label{eqn:exact_sequence_including_DG}
 \xymatrix@C=40pt{
 0 \ar[r] &
 \Lambda^\dual \ar[r]^(0.45){\beta^\dual} &
 (\Z^d)^\dual \ar[r]^{\beta^\DG} &
 \DG(\beta).
}
\end{equation}
Since the torus $\T$ is injective as a $\Z$-module, the functor $\Hom_\Z(-,\T)$ is exact. Applying the functor to the sequence \eqref{eqn:exact_sequence_including_DG}, we obtain the exact sequence of Lie groups:
\begin{equation}
 \label{eqn:exact_seq_which_we_start_with}
  \xymatrix@C=40pt{
  G \ar[r]^{\rho} & \T^d \ar[r]^{\sigma} & T \ar[r] & \{\1\},
  }
\end{equation}
where $G=\Hom_\Z\bigl(\DG(\beta),\T\bigr)$ and $T=\Hom_\Z\bigl({\Lambda}^\dual,\T\bigr)$. Note that we can identify $\Hom_\Z((\Z^d)^\dual,\T) = \Z^d \otimes \T$ with $\T^d = \R^d/\Z^d$. The homomorphisms $\rho$ and $\sigma$ are induced by $\beta^\DG$ and $\beta^\dual$ respectively. Since $\DG(\beta)$ is an finitely generated $\Z$-module, $G$ is a compact abelian Lie group. As Section~\ref{sec:symp_toric_quot}, the group $G$ acts on $\C^d$ with a moment map $\mu$ given by \eqref{eq:moment_map_of_G}.

\begin{lem}
 Set $\tau = \sum_{\alpha=1}^d c_\alpha w^\alpha$, where $c_\alpha$ is the constant appearing Equation (\ref{eqn:polytope-as-intersection-of-hyperplanes}) and $w^\alpha = \rho^\dual(e^\alpha)$. Then $\tau$ is a regular value of the moment map $\mu$.
\end{lem}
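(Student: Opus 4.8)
The plan is to apply Lemma~\ref{lem:reg-val-and-local-freeness}, which reduces the statement to the following: whenever $z \in \mu^{-1}(\tau)$ and $\xi \in \g$ satisfy $\xi_{\C^d}(z) = 0$, then $\xi = 0$. The entire argument is a dictionary between these two hypotheses and the real exact sequence obtained from \eqref{eqn:exact_sequence_including_DG} by tensoring with $\R$,
\[
 0 \to \t^\dual \xrightarrow{\beta^\dual} (\R^d)^\dual \xrightarrow{\beta^\DG} \g^\dual \to 0,
\]
which is exact because $\coker(\beta^\DG) = \Ext_\Z^1(N,\Z)$ is finite and hence dies after $\otimes\,\R$. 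Under the identifications coming from $\Hom_\Z(-,\T)$ one has $w^\alpha = \beta^\DG(e^\alpha)$, and the infinitesimal action is $d\rho(\xi)_\alpha = \pair{w^\alpha,\xi}$, where $\pair{\cdot,\cdot}$ is the pairing of $\g^\dual$ with $\g$; thus $\xi_{\C^d}(z) = 0$ is equivalent to $\pair{w^\alpha,\xi}z_\alpha = 0$ for every $\alpha$.

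Writing $S = \{\alpha \mid z_\alpha \neq 0\}$, the vanishing of the infinitesimal action reads $\pair{w^\alpha,\xi} = 0$ for all $\alpha \in S$, so to conclude $\xi = 0$ it suffices to prove that $\{w^\alpha \mid \alpha \in S\}$ spans $\g^\dual$. I would extract this spanning from the level-set condition as follows. Putting $s_\alpha = \pi|z_\alpha|^2 \geq 0$, the equation $\mu(z) = \tau$ becomes $\sum_\alpha (s_\alpha - c_\alpha)w^\alpha = 0$, so $\sum_\alpha(s_\alpha - c_\alpha)e^\alpha$ lies in $\ker\beta^\DG = \operatorname{im}\beta^\dual$. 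Hence there is $\eta \in \t^\dual$ with $\pair{\eta,\overline{\beta(e_\alpha)}} = s_\alpha - c_\alpha$ for every $\alpha$. Since $s_\alpha \geq 0$, the description \eqref{eqn:polytope-as-intersection-of-hyperplanes} forces $\eta \in \Delta$, and the complementary set $S^c = \{\alpha \mid z_\alpha = 0\}$ equals $\{\alpha \mid \pair{\eta,\overline{\beta(e_\alpha)}} = -c_\alpha\} = \{\alpha \mid \eta \in F_\alpha\}$, the set of facets through $\eta$.

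The remaining step, which I expect to be the technical heart, is a purely linear-algebraic equivalence read off the exact sequence: $\{w^\alpha \mid \alpha \in S\}$ spans $\g^\dual$ if and only if $\operatorname{span}\{e^\alpha \mid \alpha \in S\} + \operatorname{im}\beta^\dual = (\R^d)^\dual$, and projecting onto the coordinates indexed by $S^c$ shows this is equivalent to the surjectivity of $\eta' \mapsto (\pair{\eta',\overline{\beta(e_\alpha)}})_{\alpha \in S^c}$, i.e.\ to the linear independence of $\{\overline{\beta(e_\alpha)} \mid \alpha \in S^c\}$ in $\t$. Everything therefore reduces to the independence of the inward normals attached to the facets through $\eta$, which is exactly where simplicity of $\Delta$ enters: every nonempty face of $\Delta$ contains a vertex, all facets through $\eta$ contain that vertex, and at a vertex of the simple $r$-polytope $\Delta$ precisely $r$ facets meet in the single point cut out by their hyperplanes, forcing their $r$ normals to be linearly independent; any subset of these is again independent. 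This yields the desired spanning, hence $\xi = 0$, and $\tau$ is a regular value of $\mu$.
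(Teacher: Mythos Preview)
Your argument is correct and follows exactly the route the paper indicates: the paper's ``proof'' consists only of invoking Lemma~\ref{lem:reg-val-and-local-freeness} and pointing to Guillemin--Ginzburg--Karshon \cite[Proposition~5.15]{guillemin02:_momen_hamil} for the details, and what you have written is precisely that standard argument spelled out in full --- translating the level-set condition into a point $\eta\in\Delta$, identifying the vanishing coordinates with the facets through $\eta$, and using simplicity to obtain linear independence of the relevant normals.
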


The above lemma is proved by using Lemma \ref{lem:reg-val-and-local-freeness}. (See also the proof of Proposition 5.15 in Guillemin--Ginzburg--Karshon \cite{guillemin02:_momen_hamil}.)

\begin{lem}
 \label{lem:compactness_of_stack_associated_to_stacky_polytope}
 The triple $(G,\rho,\tau)$ satisfies the assumptions of Theorem \ref{thm:symp_quot_is_symp_DM_stack}.
\end{lem}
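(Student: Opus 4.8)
The plan is to verify each ingredient required by Theorem~\ref{thm:symp_quot_is_symp_DM_stack}. Three of the five conditions are already in hand: the group $G = \Hom_\Z(\DG(\beta),\T)$ is a compact abelian Lie group, so its adjoint representation is trivial; $\rho = \Hom_\Z(\beta^\DG,\T)$ is by construction a homomorphism of Lie groups $G \to \T^d$; and $\tau = \sum_{\alpha} c_\alpha w^\alpha$ is a regular value of $\mu$ by the preceding lemma. It therefore remains to check the two numbered assumptions: that $\lm$ is nonempty, and that $\mu$ is proper. I would deduce both from a single observation, namely that the stacky polytope data convert these two analytic conditions into the nonemptiness and compactness of the polytope $\Delta$ itself.

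The key step is to make the sequence~\eqref{eqn:exact_sequence_including_DG} real. Tensoring the sequence $0 \to \Lambda^\dual \xrightarrow{\beta^\dual} (\Z^d)^\dual \xrightarrow{\beta^\DG} \DG(\beta) \to \Ext^1_\Z(N,\Z) \to 0$ of~\eqref{eqn:part_of_the_long_exact_sequence_containing_cohomology_of_Cone} with $\R$ kills the torsion term $\Ext^1_\Z(N,\Z)$, yielding a short exact sequence of real vector spaces
\[
 0 \to \t^\dual \xrightarrow{\beta^\dual\otimes\R} (\R^d)^\dual \xrightarrow{\beta^\DG\otimes\R} \DG(\beta)\otimes\R \to 0,
\]
where I use $N^\dual \otimes \R \cong \Lambda^\dual\otimes\R = \t^\dual$. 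Identifying the dual $\g^\dual$ of the Lie algebra of $G$ with $\DG(\beta)\otimes\R$, the map $\rho^\dual$ becomes $\beta^\DG\otimes\R$, so that $w^\alpha = \rho^\dual(e^\alpha)$ and $\ker\rho^\dual = \operatorname{im}(\beta^\dual\otimes\R)$. Writing $\tau = \rho^\dual\bigl(\sum_{\alpha} c_\alpha e^\alpha\bigr)$ and using the transpose relation $\pair{(\beta^\dual\otimes\R)(\eta),e_\alpha} = \pair{\eta,\overline{\beta(e_\alpha)}}$, I would show that the affine map
\[
 \eta \longmapsto \sum_{\alpha=1}^d c_\alpha e^\alpha + (\beta^\dual\otimes\R)(\eta)
\]
sends $\t^\dual$ into $(\R^d)^\dual$ and restricts to a homeomorphism of the polytope $\Delta$ of~\eqref{eqn:polytope-as-intersection-of-hyperplanes} onto the slice $\{ s \in (\R^d)^\dual \mid \pair{s,e_\alpha}\geq 0,\ \rho^\dual(s)=\tau \}$, since the inequalities $\pair{s,e_\alpha}\geq 0$ translate exactly into $\pair{\eta,\overline{\beta(e_\alpha)}} \geq -c_\alpha$.

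With this correspondence, both remaining conditions follow from the fact that $\Delta$ is a nonempty compact polytope. Choosing any $\eta_0 \in \Delta$ gives coefficients $s^0_\alpha = c_\alpha + \pair{\eta_0,\overline{\beta(e_\alpha)}} \geq 0$ with $\tau = \sum_{\alpha} s^0_\alpha w^\alpha$; since the image of $\mu$ in~\eqref{eq:moment_map_of_G} is precisely the cone $\{\sum_{\alpha} s_\alpha w^\alpha \mid s_\alpha \geq 0\}$, this exhibits $\tau$ as a value of $\mu$ and shows $\lm \neq \emptyset$. For properness I would invoke Lemma~\ref{lem:properness} with this same presentation $\tau = \sum_{\alpha} s^0_\alpha w^\alpha$ (admissible because $s^0_\alpha\geq 0$): the associated slice is the one identified above with $\Delta$, hence compact, and the lemma yields properness of $\mu$.

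The main obstacle is purely the bookkeeping of identifications in the second step: checking that $\g^\dual \cong \DG(\beta)\otimes\R$ carries $\rho^\dual$ to $\beta^\DG\otimes\R$, that tensoring with $\R$ genuinely kills $\Ext^1_\Z(N,\Z)$ so the sequence becomes short exact, and that the transpose relation matches the sign conventions in the facet inequalities~\eqref{eqn:polytope-as-intersection-of-hyperplanes}. Once these identifications are pinned down, the geometric content — nonemptiness and boundedness of $\Delta$ — is immediate from the definition of a polytope, and no further analysis of the moment map is required.
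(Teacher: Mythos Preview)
Your argument is correct and follows essentially the same route as the paper's own proof: both identify the slice $\{s \in (\R^d)^\dual \mid \pair{s,e_\alpha}\geq 0,\ \rho^\dual(s)=\tau\}$ with the affine image of $\Delta$ under $\eta \mapsto \sum_\alpha c_\alpha e^\alpha + (\beta^\dual\otimes\R)(\eta)$ (the paper writes this map as $\sigma^\dual(\Delta)+\tau'$), and then read off nonemptiness and compactness from those of $\Delta$ to feed into Lemma~\ref{lem:properness}. Your treatment is more explicit about the identifications $\g^\dual \cong \DG(\beta)\otimes\R$ and $\rho^\dual = \beta^\DG\otimes\R$, but the geometric content is identical.
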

\begin{proof}

By Lemma \ref{lem:properness}, 
it suffices to show that 
\begin{enumerate}
 \item $\tau \in \bigl\{ \sum_{\alpha=1}^d s_\alpha w^\alpha \!\ \big|\!\ s_\alpha \geq 0 \bigr\}$, and 
 \item the set $\Delta_\tau = \bigl\{	s \in (\R^d)^\dual \ \big|\!\ \pair{s,\vec{e}_\alpha} \geq 0,\ \rho^\dual(s)=\tau \bigr\}$ is compact.
\end{enumerate}
The first condition is equivalent that $\Delta_\tau \ne \varnothing$. Putting $\tau' = \sum_{\alpha=1}^d c_\alpha \vec{e}^\alpha$, we have 
\begin{align*}
 \sigma^\dual(\Delta) + \tau'
 &= \bigl\{
 \sigma^\dual(\eta)+\tau' \in (\R^d)^\dual \big|
 \pair{\eta,\sigma(\vec{e}_\alpha)} \geq -c_\alpha
 \bigr\} \\
 &= \bigl\{
 s \in (\R^d)^\dual \big|
 \pair{s,\vec{e}_\alpha} \geq 0,\
 \rho^\dual(s) = \tau
 \bigr\} \\
 &= \Delta_\tau.
\end{align*}
Since $\Delta$ is nonempty and compact, so is $\Delta_\tau$.
\end{proof}

Applying Proposition \ref{thm:symp_quot_is_symp_DM_stack} to the triple $(G,\rho,\tau)$, we finally obtain a compact symplectic Deligne--Mumford stack $\C^d\sq{\tau}G$. 

\begin{thm}
 Let $\vec{\Delta}=(N,\Delta,\beta)$ be a stacky polytope. Define $(G,\rho,\tau)$ by a triple consisting of 
 \begin{itemize}
  \item the Lie group $G = \Hom_\Z(\DG(\beta),\T)$,
  \item the homomorphism $\rho:G \to \T^d$ induced by $\beta^{\DG}:(\Z^d)^\dual \to \DG(\beta)$, and
  \item the covector $\tau=\sum_{\alpha=1}^d c_\alpha w^\alpha$.
 \end{itemize}
Here $d$ and $c_\alpha$ are the constants appearing Definition \ref{dfn:stacky_polytopes} and $w^\alpha = \rho^\dual(e^\alpha)$. Then the triple $(G,\rho,\tau)$ satisfies the assumptions of Theorem \ref{thm:symp_quot_is_symp_DM_stack}. 
\end{thm}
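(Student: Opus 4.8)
The plan is to check the hypotheses of Theorem~\ref{thm:symp_quot_is_symp_DM_stack} for the triple $(G,\rho,\tau)$ one at a time, since the analytic content has already been isolated in the preceding lemmas. In effect this theorem consolidates the construction, so the proof should assemble the established pieces rather than perform new computation.

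First I would dispose of the structural requirements on $G$ and $\rho$. Because $\DG(\beta)=\H^1(\Cone(\beta)^\dual)$ is a finitely generated $\Z$-module, its dual $G=\Hom_\Z(\DG(\beta),\T)$ is a compact abelian Lie group, as already observed in the construction; being abelian, its adjoint representation is trivial, which is exactly the hypothesis imposed on $G$. The map $\rho=\Hom_\Z(\beta^\DG,\T)$ is then a homomorphism of Lie groups $G\to\Hom_\Z((\Z^d)^\dual,\T)\cong\T^d$, and $\tau=\sum_\alpha c_\alpha w^\alpha$ lies in $\g^\dual$ by construction, so the data $(G,\rho,\tau)$ are of the type demanded by the theorem.

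Next I would address the requirement that $\tau$ be a regular value of the moment map \eqref{eq:moment_map_of_G}. This is precisely the content of the lemma proved immediately before Lemma~\ref{lem:compactness_of_stack_associated_to_stacky_polytope}, which itself rests on the characterisation of regular values by local freeness of the $G$-action given in Lemma~\ref{lem:reg-val-and-local-freeness}; I would simply invoke it.

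Finally I would establish the two numbered conditions, nonemptiness of $\lm$ and properness of $\mu$, by citing Lemma~\ref{lem:compactness_of_stack_associated_to_stacky_polytope}. Its proof identifies the slice $\Delta_\tau$ with the translate $\sigma^\dual(\Delta)+\tau'$; since $\Delta$ is a polytope, hence nonempty and compact, so is $\Delta_\tau$, and nonemptiness of $\Delta_\tau$ is equivalent to the first condition of Lemma~\ref{lem:properness} while its compactness yields properness of $\mu$ by that same lemma, and nonemptiness of $\Delta_\tau$ in turn forces $\lm$ to be nonempty. The only point demanding any care is the bookkeeping that the covector $\tau$ and the weights $w^\alpha=\rho^\dual(e^\alpha)$ named in the theorem coincide with those fed into Lemma~\ref{lem:compactness_of_stack_associated_to_stacky_polytope}, which is immediate from the definitions in Definition~\ref{dfn:stacky_polytopes}. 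Beyond this, the genuine difficulty has already been absorbed into the earlier lemmas, so I expect no real obstacle to remain at this stage.
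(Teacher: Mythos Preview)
Your proposal is correct and matches the paper's approach exactly: the theorem is a consolidation of the preceding lemmas (regularity of $\tau$, then Lemma~\ref{lem:compactness_of_stack_associated_to_stacky_polytope} for nonemptiness and properness, together with the observation that $G$ is compact abelian), and the paper in fact states it without a separate proof since all the content has already been established.
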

By Theorem \ref{thm:symp_quot_is_symp_DM_stack}, we obtain the symplectic quotient $\C^d\sq{\tau}G$ associated to the above triple $(G,\rho,\tau)$. We call the symplectic quotient the \emph{symplectic Deligne--Mumford stack $\X_{\vec\Delta}$ associated to the stacky polytope $\vec{\Delta}$}.

\begin{remark}
For the symplectic Deligne--Mumford stack associated to a stacky polytope, all stabilizer groups are abelian because $\Hom_\Z(\DG(\beta),\T)$ is abelian.
\end{remark}

\begin{remark}
Lerman and Tolman defined labelled polytopes to classify compact symplectic toric orbifolds and established a construction (Lerman--Tolman--Delzant construction) of compact symplectic toric orbifolds from labelled polytopes \cite{LT}. Their construction can also be used to produce symplectic Deligne--Mumford stacks. Ignoring a Hamiltonian structure, our construction is slightly wider than the Lerman--Tolman--Delzant construction in the following sense: Let $(N,\Delta,\beta)$ be a stacky polytope. Suppose $N$ to be free over $\Z$. Define a label $m_\alpha$ of $\alpha$-th facet $F_\alpha$ by the identity $\beta(e_\alpha) = m_\alpha \nu_\alpha$, where $\nu_\alpha$ is the primitive inward-pointing vector perpendicular to the facet $F_\alpha$. Then the rational convex polytope $\Delta$ together with the labels $m_1, \dots, m_d$ associated to the facets $F_1, \dots, F_d$ gives a labelled polytope. On the other hand every labelled polytope arises in this way.
\end{remark}

\subsection{The stacky polytope of a torus quotient}
\label{subsection:stacky_polytope_of_torus_quotients}


Given a symplectic Deligne--Mumford stack $\X$ (with abelian stabilizer groups) it is natural to ask whether there is a stacky polytope $\vec{\Delta}$ such that $\X_{\vec{\Delta}}$ is equivalent to $\X$. In this section we give a partial solution to this question: If $G$ is a torus and $\X$ is the symplectic Deligne--Mumford stack $\X$ associated to a triple $(G,\rho,\tau)$ satisfying the assumptions in Theorem \ref{thm:symp_quot_is_symp_DM_stack}, then we can find a stacky polytope $\vec{\Delta}$ in such a way that the associated stack $\X_{\vec\Delta}$ is equivalent to $\X$.

Define $\Z_G $ by $\ker(\exp:\Lie(G) \to G)$. Then the homomorphism $\rho$ induces a monomorphism of $\Z$-modules $\dot\rho: \Z_G \to \Z^d$ and we have a short exact sequence
\begin{equation}
 \label{eqn:projective_resolution_of_N}
 \xymatrix@C=30pt{
  0 \ar[r] &
  \Z_G \ar[r]^<(0.35){\dot\rho} &
  \Z^d \ar[r]^<(0.35){\beta} &
  N \ar[r] &
  0,
  }
\end{equation}
where $N$ is the cokernel of $\dot\rho$ and $\beta: \Z^d \to N$ is the natural quotient homomorphism. The image $\Lambda$ of $N$ in the vector space $\t=N\otimes\R$ is a lattice of $\t$ and the torus $T = \t/\Lambda$ can be naturally identified with the quotient torus $\T^d/\rho(G)$. The composition map of $\beta$ and the natural projection $N\to\Lambda$ gives rise to a homomorphism of tori $\sigma: \T^d \to T$. 

We may take a homomorphism $s: T\to\T^d$ satisfying $\sigma \circ s = \id_T$. Consider the following map:
\[
 \bar{\mu}: \lm \to \t^\dual;\ \ z \mapsto 
 \sum_{\alpha=1}^d \bigl(\pi|z_\alpha|^2-c_\alpha \bigr)s^\dual(\vec{e}^\alpha).\]
Here $c_1, \dots, c_d$ are real numbers satisfying $\sum_\alpha c_\alpha w^\alpha = \tau$. The image of the map is a convex polytope
\[
 \Delta = \bigl\{\eta \in \t^\dual \ \big| \pair{\eta,\bar{n}_\alpha} \geq -c_\alpha\ (\alpha=1,\dots,d) \bigr\},
\]
where $n_\alpha = \beta(\vec{e}_\alpha)$ and $\bar{n}_\alpha$ is the image of $n_\alpha$ via the natural projection $N\to\Lambda$. 

\begin{lem}
 The triple $\vec{\Delta} = (N,\Delta,\beta)$ is a stacky polytope.
\end{lem}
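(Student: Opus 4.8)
The plan is to check the three conditions of Definition~\ref{dfn:stacky_polytopes} for $(N,\Delta,\beta)$, all of which I expect to follow from a single identity computing $\bar\mu$ in facet coordinates together with the regularity of $\tau$. Condition~(3) is immediate: by \eqref{eqn:projective_resolution_of_N} the map $\beta\colon\Z^d\to N$ is the cokernel projection of $\dot\rho$, hence surjective, so $\coker(\beta)=0$, and $N$ is finitely generated of rank $r=d-\dim G$. The work is in conditions~(1) and~(2).

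First I would set up the linear algebra. Tensoring \eqref{eqn:projective_resolution_of_N} with $\R$ gives an exact sequence $0\to\g\xrightarrow{\bar\rho}\R^d\xrightarrow{\bar\sigma}\t\to 0$, where $\bar\rho=\Lie(\rho)$ and $\bar\sigma(\vec{e}_\alpha)=\bar{n}_\alpha$; dualizing gives $0\to\t^\dual\xrightarrow{\bar\sigma^\dual}(\R^d)^\dual\xrightarrow{\rho^\dual}\g^\dual\to 0$, and $\sigma\circ s=\id_T$ dualizes to $s^\dual\circ\bar\sigma^\dual=\id_{\t^\dual}$. Now fix $z\in\lm$. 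Since $\tau=\rho^\dual\bigl(\sum_\alpha c_\alpha\vec{e}^\alpha\bigr)$ and $\mu(z)=\tau$, the covector $p(z):=\sum_\alpha(\pi|z_\alpha|^2-c_\alpha)\vec{e}^\alpha$ satisfies $\rho^\dual(p(z))=\mu(z)-\tau=0$, so by exactness $p(z)=\bar\sigma^\dual(\eta)$ for a unique $\eta\in\t^\dual$. Applying $s^\dual$ gives $\bar\mu(z)=s^\dual(p(z))=\eta$, whence
\[
\pair{\bar\mu(z),\bar{n}_\alpha}=\pair{\eta,\bar\sigma(\vec{e}_\alpha)}=\pair{p(z),\vec{e}_\alpha}=\pi|z_\alpha|^2-c_\alpha\ \ge\ -c_\alpha,
\]
with equality exactly when $z_\alpha=0$. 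This single computation shows at once that $\Delta=\bar\mu(\lm)$ is the asserted intersection of half-spaces, that each $\bar{n}_\alpha=\overline{\beta(e_\alpha)}$ is inward-pointing across the wall $\{z_\alpha=0\}$, and that the $\alpha$-th facet is $\bar\mu\bigl(\{z\in\lm:z_\alpha=0\}\bigr)$ --- this is condition~(2).

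For condition~(1) I would convert regularity of $\tau$ into simplicity. For $z\in\lm$ set $I(z)=\{\alpha:z_\alpha=0\}$. The infinitesimal $\T^d$-stabilizer of $z$ is the coordinate subspace $\mathrm{span}\{\vec{e}_\alpha:\alpha\in I(z)\}$, so the $\g$-stabilizer is $\{\xi\in\g:\bar\rho(\xi)\in\mathrm{span}\{\vec{e}_\alpha:\alpha\in I(z)\}\}$. By Lemma~\ref{lem:reg-val-and-local-freeness}, regularity of $\tau$ is exactly the vanishing of this stabilizer for every $z\in\lm$, i.e.\ $\bar\rho(\g)\cap\mathrm{span}\{\vec{e}_\alpha:\alpha\in I(z)\}=0$; and since $\ker\bar\sigma=\bar\rho(\g)$, this is equivalent to the linear independence of $\{\bar{n}_\alpha:\alpha\in I(z)\}$. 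Thus the active inward normals are independent at every point of $\lm$. At a vertex $\eta_0=\bar\mu(z)$ the active normals also span $\t$ (else $\eta_0$ would lie on a positive-dimensional face), so they form a basis of the $r$-dimensional space $\t$; hence exactly $r$ constraints are active and $\eta_0$ lies on exactly $r$ facets. Therefore $\Delta$ is simple.

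The conceptual heart --- and the step I expect to fight hardest with --- is this last translation: identifying the stabilizer as a coordinate subspace and reading regularity off the dual exact sequence, so that local freeness becomes the combinatorial independence underlying simplicity. One further point must not be glossed over: for $(N,\Delta,\beta)$ to be a stacky polytope in the strict sense $\Delta$ must have exactly $d$ facets, so I would still have to confirm that none of the $d$ inequalities is redundant and that each $F_\alpha$ is genuinely $(r-1)$-dimensional. I would attempt this by producing, for each $\alpha$, a point of $\lm$ with $z_\alpha=0$ but all other coordinates nonzero, using the compactness of $\Delta$ supplied by properness (Lemma~\ref{lem:properness}); I expect this irredundancy bookkeeping, rather than simplicity itself, to be the genuinely delicate part of the argument.
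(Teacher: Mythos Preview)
Your argument is correct and, for the simplicity condition, takes a genuinely different route from the paper. The paper dispatches conditions~(2) and~(3) as ``obviously satisfied'' and proves simplicity by quoting an external result (Cieliebak--Salamon, Lemma~E.1), which asserts that for a regular value the face $\Delta_I=\{\eta\in\Delta:\pair{\eta,\bar n_\alpha}=-c_\alpha\text{ for }\alpha\in I\}$ is either empty or has codimension exactly $|I|$; simplicity then follows by elementary face combinatorics (a vertex $\Delta_I$ has $|I|=\dim\t^\dual$, and for each $j\in I$ the set $\Delta_{\{j\}}$ is a nonempty facet containing it). Your approach instead reproves the essential content of that cited lemma from first principles, using only the paper's own Lemma~\ref{lem:reg-val-and-local-freeness}: you read off from the dual exact sequence that local freeness at $z$ is exactly $\bar\rho(\g)\cap\mathrm{span}\{\vec e_\alpha:\alpha\in I(z)\}=0$, which via $\ker\bar\sigma=\bar\rho(\g)$ is exactly linear independence of $\{\bar n_\alpha:\alpha\in I(z)\}$. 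This is what forces $\codim\Delta_I=|I|$. Your route is more self-contained and makes the mechanism (regularity $\Leftrightarrow$ local freeness $\Leftrightarrow$ independence of active normals) transparent; the paper's route is shorter but requires unpacking an external reference to see why it works.

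On irredundancy: you are right to flag that the definition of a stacky polytope requires $\Delta$ to have exactly $d$ facets, and that your simplicity argument (like the paper's) only shows that every constraint \emph{active at some vertex} defines a genuine facet. The paper does not address this point at all --- it is silently absorbed into ``obviously satisfied.'' Your proposed attack (exhibit, for each $\alpha$, a point $z\in\lm$ with $z_\alpha=0$, using properness and compactness of $\Delta$) is the natural one. So on this issue you are being more scrupulous than the paper itself.
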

\begin{proof}
The conditions in Definition~\ref{dfn:stacky_polytopes} are obviously satisfied except the simplicity of $\Delta$. For $I \subset \{1,\dots,d\}$ we define a subset $\Delta_I$ of $\Delta$ by 
\[
 \Delta_I = \{ \eta \in \Delta \ \big|
 \pair{\eta,\bar{n}_\alpha} = -c_\alpha\ \text{ for } \alpha \in I
 \}.
\]
Each face of $\Delta$ can be described as $\Delta_I$ for some $I$. According to Cieliebak--Salamon \cite[Lemma E.1]{cieliebak06:_wall}, the set $\Delta_I$ is empty or has codimension $|I|$ in $\t^\dual$. Note that $\dim \Delta = \dim \Delta_\varnothing = \dim \t^\dual$. Suppose $\Delta_I$ is a vertex of $\Delta$. If $\Delta_J$ is a nonempty facet containing the vertex $\Delta_I$, then $J \subset I$. Because $|J| = \codim \Delta_J = 1$, $J = \{j\}$ for some $j \in I$. On the other hand, for any $j \in I$, the set $\Delta_{\{j\}}$ is a nonempty facet containing the vertex $\Delta_I$.

The above discussion implies that the number of nonempty facets containing the vertex $\Delta_I$ is equal to $|I|$. Since $|I|=\codim\Delta_I=\dim\t^\dual$, the convex polytope $\Delta$ is simple. 
\end{proof}

\begin{thm}
 The symplectic Deligne--Mumford stack $\X_\vec{\Delta}$ associated to the stacky polytope $\vec{\Delta}$ is equivalent to $\C^d\sq{\tau}G$.
\end{thm}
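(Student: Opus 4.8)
The plan is to show that the triple $(G',\rho',\tau')$ which the construction of Subsection~\ref{subsection:stacky_polytope} attaches to $\vec\Delta=(N,\Delta,\beta)$ agrees, under a natural isomorphism, with the triple $(G,\rho,\tau)$ we started from. Both $\X_{\vec\Delta}=\C^d\sq{\tau'}G'$ and $\C^d\sq{\tau}G$ are quotient stacks of the form $[\mu^{-1}(\text{level set})/(\text{group})]$ whose group acts on $\C^d$ through a homomorphism into $\T^d$, so it suffices to produce an isomorphism $G'\to G$ intertwining $\rho'$ with $\rho$ — so that the two $G$-actions on $\C^d$, the two moment maps, and the two level sets coincide — together with the equality $\tau'=\tau$. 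Then the two stacks are literally identical, hence a fortiori equivalent.

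The engine is the short exact sequence~\eqref{eqn:projective_resolution_of_N}. Since $G$ is a torus, $\Z_G=\ker(\exp:\Lie(G)\to G)$ is free over $\Z$, so \eqref{eqn:projective_resolution_of_N} is a length-one free resolution of $N$. I would compute $\DG(\beta)=\H^1(\Cone(\beta)^\dual)$ using this resolution of $N$ together with the trivial resolution $E_0=\Z^d$ (all higher $E_i=0$) of $\Z^d$; this is legitimate because $\DG(\beta)$ and $\beta^{\DG}$ are well defined up to natural isomorphism. With these choices the lift of $\beta$ is the identity $E_0\to F_0=\Z^d$, so $\Cone(\beta)$ collapses to the two-term complex $\Z^d\oplus\Z_G\to\Z^d$, $(e,f)\mapsto\dot\rho(f)-e$. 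Dualizing and taking $\H^1$ gives $\DG(\beta)=\coker\bigl((\Z^d)^\dual\to(\Z^d)^\dual\oplus(\Z_G)^\dual\bigr)$, and normalising representatives by means of the first summand identifies $\DG(\beta)\cong(\Z_G)^\dual$ with $\beta^{\DG}$ becoming $\dot\rho^\dual$. Equivalently, applying $\Hom_\Z(-,\Z)$ to \eqref{eqn:projective_resolution_of_N} reproduces the exact sequence~\eqref{eqn:part_of_the_long_exact_sequence_containing_cohomology_of_Cone}, with $N^\dual\cong\Lambda^\dual$, $\DG(\beta)=(\Z_G)^\dual$, $\beta^{\DG}=\dot\rho^\dual$, and $\Ext_\Z^1(N,\Z)=\coker(\dot\rho^\dual)$.

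Now I would dualize into the torus. Applying $\Hom_\Z(-,\T)$ and using that $\Z_G$ is free, $G'=\Hom_\Z(\DG(\beta),\T)\cong\Hom_\Z\bigl((\Z_G)^\dual,\T\bigr)\cong\Z_G\otimes\T=G$, which recovers the original torus canonically; double duality turns $\rho'=\Hom_\Z(\beta^{\DG},\T)=\Hom_\Z(\dot\rho^\dual,\T)$ into $\dot\rho\otimes\id_\T$, which is exactly the original $\rho:G\to\T^d$. Consequently the weights agree, $(w')^\alpha=(\rho')^\dual(\vec e^\alpha)=w^\alpha$, and hence $\tau'=\sum_{\alpha=1}^d c_\alpha(w')^\alpha=\sum_{\alpha=1}^d c_\alpha w^\alpha=\tau$ by the defining relation of the $c_\alpha$. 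Thus $(G',\rho',\tau')=(G,\rho,\tau)$ and the stacks coincide. I expect the main obstacle to be the bookkeeping in the middle step: one must track the natural isomorphism through the mapping cone and its dual so that $\beta^{\DG}$ genuinely becomes $\dot\rho^\dual$ — not merely that $\DG(\beta)$ and $(\Z_G)^\dual$ are abstractly isomorphic — since it is precisely this compatibility that forces $\rho'=\rho$, and hence identifies the induced $G$-actions on $\C^d$ rather than only the abstract groups.
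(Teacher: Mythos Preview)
Your proposal is correct and follows essentially the same route as the paper: both use the short exact sequence~\eqref{eqn:projective_resolution_of_N} as a free resolution of $N$, compute the dual mapping cone explicitly to identify $\DG(\beta)\cong(\Z_G)^\dual$ with $\beta^{\DG}$ becoming $\dot\rho^\dual$, and then apply $\Hom_\Z(-,\T)$ to recover the original $(G,\rho)$, concluding that both stacks arise from the same triple $(G,\rho,\tau)$. Your write-up is slightly more explicit about the verification $\tau'=\tau$ and about why the identification of $\beta^{\DG}$ (not merely of $\DG(\beta)$) is what matters, but the argument is the same.
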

\begin{proof}
Note that the short exact sequence \eqref{eqn:projective_resolution_of_N} gives a projective resolution of the $\Z$-module $N$. The dual of the mapping cone is given by
\[
 \xymatrix@C=30pt{0 \ar[r] & (\Z^d)^\dual \ar[r]^(0.37){d} & (\Z^d)^\dual\oplus(\Z_G)^\dual \ar[r] & 0 \ar[r] & \cdots.}
\]
Here the differential $d$ is explicitly given by $d\vec{e}^\alpha = (-\vec{e}^\alpha,\rho^\dual(\vec{e}^\alpha))\ (\alpha=1,\dots,d)$. We can identify $\DG(\beta) = \coker(d)$ with $(\Z_G)^\dual$ by the map
\[
 \DG(\beta) \to (\Z_G)^\dual;\ 
 [\vec{e}^\alpha,w] \mapsto \rho^\dual(\vec{e}^\alpha)+w.
\]
Under this identification, the abelian Lie group $\Hom_\Z(\DG(\beta),\T)$ is the torus $G$ and the homomorphism $\Hom_Z(\DG(\beta),\T) \to \T^d$ induced by $\beta^\DG$ in the exact sequence \eqref{eqn:exact_sequence_including_DG} is the same as the homomorphism $\rho: G \to \T^d$. Therefore $\X_\vec{\Delta}$ and $\C^d\sq{\tau}G$ are both defined by the same data $(G,\rho,\tau)$. 
\end{proof}

\begin{examples}
 Let $\vec{w}=(w_1,\dots,w_d)$ be a $d$-tuple of positive integers. The \emph{weighted projective space} $\P(\vec{w})$ of weight $\vec{w}$ is defined as follows. Consider the homomorphism $\rho:  \T \to \T^d$ defined by $\rho([\xi])=[w_1\xi,\dots,w_d\xi]$, where $\xi \in \R = \Lie(\T)$. Then the $\T$-action on $\C^d$ via $\rho$ is given by 
 \[
 \T \acts \C^d;\ 
 [\xi] \cdot (z_1,\dots,z_d) 
 = (e^{-2\pi\i w_1\xi}z_1,\dots,e^{-2\pi\i w_d\xi}z_d).
 \]
 The following map gives a moment map of the action:
 \[
 \mu:  \C^d \to \R;\ 
 (z_1,\dots,z_d) \mapsto \sum_{\alpha=1}^d \pi |z_\alpha|^2  w_\alpha.
 \]
 Here we identify $\Lie(\T)^\dual=\R^\dual$ with $\R$ via the dot product on $\R$. The triple $(\T,\rho,\pi)$ satisfies the assumptions of Theorem \ref{thm:symp_quot_is_symp_DM_stack}. The symplectic Deligne--Mumford stack $\C^d\sq{\pi}\T$ defined by the data $(\T,\rho,\pi)$ is called the weighted projective space $\P(\vec{w})$.

 A stacky polytope $(N,\Delta,\beta)$ giving $\P(\vec{w})$ consists of the following data 
 \begin{itemize}
  \item A $\Z$-module 
	$N = \coker(\dot\rho: \Z \to \Z^d) = \Z^d\big/\Z(w_1,\dots,w_d)$.
  \item A convex polytope 
	\[
	\Delta= \left\{\sum_{\alpha=1}^d s_\alpha \vec{e}^\alpha \in (\R^d)^\dual \!\ \middle| \!\ s_\alpha \geq -c_\alpha \ (\alpha=1,\dots,d),\ \sum_{\alpha=1}^d s_\alpha w_\alpha = 0 \right\}. 
	\]
  \item The natural projection
	$\beta:\Z^d \to N = \Z^d\big/\Z(w_1,\dots,w_d)$.
 \end{itemize}
 Here $N^\dual\otimes\R = (\R^d/\R(w_1,\dots,w_d))^\dual$ is embedded in $(\R^d)^\dual$ via the induced linear map $\dot\rho$ and $c_1,\dots,c_d$ are real constants with $\sum_{\alpha} c_\alpha w_\alpha = \pi$.
\end{examples}

\section{Stacky polytopes versus stacky fans}

We discuss the relation between stacky polytopes and stacky fans in this section. First we review briefly the Deligne--Mumford stack $\X_{\vec{\Sigma}}$ associated to a stacky fan $\vec{\Sigma}$ which is introduced by Borisov et al. \cite{borisov05:_chow_delig_mumfor}. Their construction is an extension of the quotient construction of Cox \cite{cox95}. Next we assign a stacky fan $\vec{\Sigma}$ to a stacky polytope $\vec{\Delta}$ and establish an equivalence between $\X_{\vec{\Sigma}}$ and $\X_{\vec{\Delta}}$. See Cox \cite{cox05:_lectur_toric_variet} for terminology used in the theory of toric varieties.

\begin{dfn}
Consider a triple $(N,\Sigma,\beta)$ of
\begin{itemize}
 \item a finitely generated $\Z$-module $N$ of rank $r$, 
 \item a fan $\Sigma$ with $d$ rays $\rho_1,\dots,\rho_d$ in $\t=N\otimes_\Z\R$,
       and
 \item a homomorphism of $\Z$-modules $\beta :  \Z^d \to N$.
\end{itemize}
We call the triple $(N,\Sigma,\beta)$ a \emph{stacky fan} if the following conditions are satisfied.
\begin{enumerate}
 \item \squashup
       The fan $\Sigma$ is simplicial, that is, the minimal generators of every cone $\sigma \in \Sigma$ are linearly independent in $\t$.
 \item \squashup
       We denote by $\bar{n}_\alpha\ (\alpha=1,\dots,d)$ the image of $n_\alpha = \beta(\vec{e}_\alpha)$ through the natural map $N \to \t$. Then $\bar{n}_\alpha$ generates the ray $\rho_\alpha$ of $\Sigma$.
 \item \squashup
       The cokernel of the homomorphism $\beta :  \Z^d \to N$ is finite.
\end{enumerate}
\end{dfn}

Let $\C[z_1,\dots,z_d]$ be the coordinate ring of $\C^d$. The $\alpha$-th coordinate $z_\alpha$ corresponds to $\alpha$-th ray $\rho_\alpha$. For each cone $\sigma$, the monomial $\prod_{\alpha: \rho_\alpha \not\subset \sigma} z_\alpha$ is denoted by $z^{\hat{\sigma}}$. We define an ideal $J_\Sigma$ of $\C[z_1,\dots,z_d]$ as the ideal generated by the monomials $z^{\hat{\sigma}}\ (\sigma \in \Sigma)$. The Zariski open subset $\C^d\setminus\V(J_\Sigma)$ is denoted by $Z_\Sigma$.

Let $\TC$ be the complex torus $\C/\Z$. Applying the exact functor $\Hom_\Z(-,\TC)$ to the exact sequence \eqref{eqn:exact_sequence_including_DG}, we have 
\[
 \xymatrix@C=40pt{
 G_\C \ar[r]^{\rho_\C} &
 \TC^d \ar[r] &
 T_\C \ar[r] &
 \{\1\}.
 }
\] 
Here $G_\C = \Hom_\Z(\DG(\beta),\TC)$ and $T_\C = \Hom_\Z(N^\dual,\TC)$. Note that we can identify $\Hom_\Z((\Z^d)^\dual,\TC)$ with $\TC^d = \C^d/\Z^d$ naturally. The $d$-dimensional torus $\TC^d$ naturally acts on $Z_\Sigma$. Therefore the group $G_\C$ also acts on $Z_\Sigma$ through the homomorphism $\rho_\C: G_\C \to \TC^d$. Let $\X_{\vec{\Sigma}}$ be the quotient stack $\bigl[Z_\Sigma / G_\C\bigr]$. 

\begin{remark}
 The stack $\X_{\vec{\Sigma}}$ is usually considered as a stack over the category of schemes \cite{borisov05:_chow_delig_mumfor}. Since $Z_\Sigma$ is an open subset of $\C^d$ with respect to the usual topology and the $G_\C$-action on $Z_\Sigma$ is smooth, we consider $\X_{\vec{\Sigma}}$ as a stack over $\Diff$.
\end{remark} 

\begin{prop}
 [Borisov--Chen--Smith \cite{borisov05:_chow_delig_mumfor}]
 For each stacky fan $\vec{\Sigma} = (N,\Sigma,\beta)$, the quotient stack $\X_\vec{\Sigma}$ is a Deligne--Mumford stack. The underlying space of $\X_\vec{\Sigma}$ is the toric variety determined by the fan $\Sigma$. The stack $\X_\vec{\Sigma}$ is called the \emph{toric Deligne--Mumford stack associated to the stacky fan} $\vec{\Sigma}$.
\end{prop}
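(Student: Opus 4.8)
The plan is to reduce both assertions to standard facts. For the first, I would check that the smooth action of $G_\C$ on $Z_\Sigma$ is proper and locally free, and then invoke the earlier proposition stating that a proper, locally free action yields a Deligne--Mumford quotient stack; since $G_\C$ is a genuine (non-compact) Lie group, that proposition applies verbatim once its hypotheses are met. For the second, I would identify the underlying space $|[Z_\Sigma/G_\C]| = Z_\Sigma/G_\C$ with the toric variety $X_\Sigma$ by appealing to the quotient construction of Cox. Because the non-compactness of $G_\C$ blocks the easy route through compactness used in Subsection~\ref{sec:symp_toric_quot}, I expect properness to be the crux, while local freeness will fall out cleanly from the simplicial hypothesis.

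For local freeness I would compute infinitesimal stabilisers. Applying $\Lie\circ\Hom_\Z(-,\TC)$ to \eqref{eqn:exact_sequence_including_DG}, and using that $\C$ is an injective $\Z$-module so that the finite term $\Ext_\Z^1(N,\Z)$ appearing in \eqref{eqn:part_of_the_long_exact_sequence_containing_cohomology_of_Cone} disappears, yields a short exact sequence of complex vector spaces
\[
 0 \to \Lie(G_\C) \xrightarrow{\ d\rho_\C\ } \C^d \xrightarrow{\ \beta_\C\ } \t\otimes_\R\C \to 0,
\]
in which $\beta_\C(\vec{e}_\alpha)=\bar{n}_\alpha$ and $d\rho_\C$ is injective precisely because $\coker(\beta)$ is finite. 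A point $z\in Z_\Sigma$ lies off $\V(J_\Sigma)$, so some $z^{\hat\sigma}\neq 0$ and hence $S_z=\{\alpha\mid z_\alpha=0\}$ is contained in the rays of a single cone $\sigma\in\Sigma$. Since $\TC^d$ acts by scaling coordinates, $\xi\in\Lie(G_\C)$ lies in the infinitesimal stabiliser at $z$ iff $(d\rho_\C\xi)_\alpha=0$ for every $\alpha\notin S_z$; as $d\rho_\C\xi\in\ker\beta_\C$ this reads $\sum_{\alpha\in S_z}(d\rho_\C\xi)_\alpha\bar{n}_\alpha=0$. The linear independence of $\{\bar{n}_\alpha\}_{\alpha\in S_z}$ guaranteed by simpliciality forces $d\rho_\C\xi=0$, whence $\xi=0$ by injectivity. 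Thus all infinitesimal stabilisers vanish and the action is locally free, exactly in the sense of the criterion used in Lemma~\ref{lem:reg-val-and-local-freeness}.

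The hard part is properness, where the acting group is no longer compact. I would establish it on the $G_\C$-invariant affine cover $\{U_\sigma=\{z^{\hat\sigma}\neq 0\}\}_\sigma$ of $Z_\Sigma$. On $U_\sigma$ the coordinates $z_\alpha$ with $\rho_\alpha\not\subset\sigma$ are invertible, and simpliciality lets one construct local slices identifying $U_\sigma/G_\C$ with the affine toric orbifold chart $\mathrm{Spec}\,\C[\sigma^\dual\cap N^\dual]$ of dimension $r$, on which $G_\C$ acts with the finite stabilisers found above. One then checks that $G_\C\times U_\sigma\to U_\sigma\times U_\sigma,\ (g,z)\mapsto(g\cdot z,z)$ has compact preimages of compacta; the fan axiom that any two cones meet in a common face is exactly what forces these local quotients to glue to a separated space and the global action to be proper. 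This is essentially Cox's statement that a simplicial fan produces a geometric quotient with closed orbits and finite stabilisers. I expect reconciling the slices across the cover, and checking that properness survives the patching, to be the principal technical obstacle.

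With both hypotheses in hand, the proposition on proper, locally free actions shows that $\X_{\vec\Sigma}=[Z_\Sigma/G_\C]$ is Deligne--Mumford. For the underlying space I would note that, by the description of the underlying space of a quotient stack in Subsection~\ref{subsec:DM_stacks}, $|\X_{\vec\Sigma}|$ is the topological quotient $Z_\Sigma/G_\C$, which Cox's quotient presentation identifies with the toric variety $X_\Sigma$ determined by $\Sigma$. The passage from the scheme-theoretic to the differentiable setting is harmless: as observed in the remark preceding the statement, $Z_\Sigma$ is analytically open in $\C^d$ and the $G_\C$-action is smooth, so the same groupoid presents $\X_{\vec\Sigma}$ over $\Diff$.
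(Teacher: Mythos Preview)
The paper does not prove this proposition at all: it is stated with attribution to Borisov--Chen--Smith and simply cited from \cite{borisov05:_chow_delig_mumfor}, with the text moving immediately on to the correspondence between polytopes and fans. So there is no ``paper's own proof'' to compare against; you are supplying an argument where the author deliberately outsourced one.

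That said, your outline is a sensible reconstruction of what such a proof would look like in the differentiable category the paper works in. The local-freeness computation via the exact sequence and simpliciality is correct and is essentially the argument in \cite{borisov05:_chow_delig_mumfor}. Your identification of properness as the genuine content is right, and the affine-cover strategy over the $U_\sigma$'s is the standard route; note, however, that the group $G_\C$ here may differ from Cox's group by a finite factor coming from $\Ext_\Z^1(N,\Z)$ (since $N$ is allowed torsion), so when you invoke Cox's geometric-quotient theorem for the underlying space you should observe that this finite discrepancy does not affect the topological quotient $Z_\Sigma/G_\C$. The appeal to the proposition on proper, locally free actions in Subsection~\ref{subsec:DM_stacks} is legitimate, as that proposition is stated for arbitrary Lie groups $G$, not just compact ones.
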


We can associate a rational fan $\Sigma \subset \t$ to each simple rational polytope $\Delta \subset \t^\dual$ \cite{cox05:_lectur_toric_variet}: each face $F$ of $\Delta$ corresponds to the cone $\sigma_F$ generated by $\bar{n}_\alpha$'s with $F_\alpha \supset F$. Then the set $\Sigma_\Delta = \bigl\{ \sigma_F\!\ \big|\!\ \text{$F$ is a face of $\Delta$} \bigr\}$ is a simplicial fan. Using this correspondence, we can associate the stacky fan $\vec{\Sigma}_\vec{\Delta} = (N,\Sigma_\Delta,\beta)$ to a stacky polytope $\vec{\Delta} = (N,\Delta,\beta)$.

\begin{thm}
 \label{thm:equivalence_between_two_stacks}
 Let $\vec{\Delta}$ be a stacky polytope and $\vec{\Sigma}$ the stacky fan $\vec{\Sigma}_{\vec{\Delta}}$ defined by $\vec{\Delta}$ as above. Then $\X_\vec{\Delta}$ and $\X_\vec{\Sigma}$ are equivalent as stacks over $\Diff$.
\end{thm}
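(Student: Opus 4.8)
The plan is to realise this equivalence as the standard identification between a symplectic reduction and the corresponding GIT (Kempf--Ness) quotient, transported to the stacky setting. The point is that both stacks are built from the \emph{same} data: the stacky fan $\vec{\Sigma}_\vec{\Delta}=(N,\Sigma_\Delta,\beta)$ shares the module $N$ and the map $\beta$ with $\vec{\Delta}$, hence produces the same $\DG(\beta)$ and the same $\beta^\DG$, so that $\X_\vec{\Delta}=[\lm/G]$ is the compact quotient by $G=\Hom_\Z(\DG(\beta),\T)$ while $\X_\vec{\Sigma}=[Z_\Sigma/G_\C]$ is the quotient of the open set $Z_\Sigma$ by $G_\C=\Hom_\Z(\DG(\beta),\TC)$. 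The first thing I would record is that $G_\C$ is the complexification of $G$: the polar decomposition $\TC=\C/\Z\cong\T\times\R$ gives $G_\C\cong G\times A$ with $A\cong\exp(\i\g)\cong\R^{\dim G}$ a vector group acting freely in the radial directions, while $G$ (acting through $\rho$ and the phases of $\T^d$) preserves each modulus $|z_\alpha|$. In particular the finite stabilisers of the two actions coincide, consistent with both stacks being Deligne--Mumford with the same isotropy.

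The second step is the combinatorial comparison $\lm\incl Z_\Sigma$. For $z\in\lm$ the relation $\mu(z)=\tau$ means $\rho^\dual\bigl(\sum_\alpha(\pi|z_\alpha|^2-c_\alpha)\vec{e}^\alpha\bigr)=0$, and by exactness of \eqref{eqn:exact_sequence_including_DG} this vector lies in the image of $\beta^\dual$; hence there is a unique $\eta_z\in\t^\dual$ with $\pair{\eta_z,\bar{n}_\alpha}+c_\alpha=\pi|z_\alpha|^2$, and since the right-hand side is nonnegative, $\eta_z\in\Delta$. Consequently, writing $I_z=\{\alpha:z_\alpha=0\}$, we have $\alpha\in I_z$ exactly when $\eta_z$ lies on the facet $F_\alpha$, so $\eta_z\in\Delta_{I_z}$ and $\Delta_{I_z}$ is a nonempty face of $\Delta$. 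By the defining correspondence $F\mapsto\sigma_F$ between faces of $\Delta$ and cones of $\Sigma_\Delta$, the set $I_z$ is contained in the ray-set $\{\alpha:F_\alpha\supseteq\Delta_{I_z}\}$ of the cone $\sigma_{\Delta_{I_z}}$, which is precisely the condition $z\in Z_\Sigma$. Thus $\lm\subseteq Z_\Sigma$, and the inclusion is $G$-equivariant.

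The heart of the argument is the Kempf--Ness statement: every $G_\C$-orbit in $Z_\Sigma$ meets $\lm$ in exactly one $G$-orbit. Since $G$ preserves the moduli and $G_\C=G\cdot A$, this reduces to solving $\mu(\exp(\i\xi)\cdot z)=\tau$ for $\xi\in\g$. The $A$-action rescales $\pi|z_\alpha|^2$ by a positive factor $e^{\ell_\alpha(\xi)}$ with $\ell_\alpha$ linear, so the assignment $\xi\mapsto\pi\sum_\alpha e^{\ell_\alpha(\xi)}|z_\alpha|^2\,w^\alpha$ is the gradient of the convex function $\xi\mapsto\pi\sum_\alpha|z_\alpha|^2 e^{\ell_\alpha(\xi)}-\pair{\tau,\xi}$, whose critical points are exactly the solutions. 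The condition $z\in Z_\Sigma$ (equivalently, that $I_z$ spans a cone of the normal fan $\Sigma_\Delta$) guarantees that enough coordinates are nonzero for this function to be proper and strictly convex transverse to the stabiliser directions; properness of $\mu$, i.e.\ compactness of $\Delta$ (Lemma \ref{lem:compactness_of_stack_associated_to_stacky_polytope}), then gives existence of a critical point, and strict convexity gives uniqueness modulo $G$. This is the step I expect to be the main obstacle, since it is where convexity and the exact choice of the semistable locus $Z_\Sigma$ enter: outside $Z_\Sigma$ the function fails to be proper and no point of the orbit reaches $\lm$.

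Granting this, the inclusion $\lm\incl Z_\Sigma$ meets every $G_\C$-orbit, and the action map $G_\C\times\lm\to Z_\Sigma$, $(k,z)\mapsto k\cdot z$, is a surjective submersion: surjectivity is the existence statement, and the submersion property holds because the $A$-directions are transverse to $\lm$. Hence $\lm$ is a full transversal of the action groupoid $G_\C\times Z_\Sigma\rightrightarrows Z_\Sigma$, and the uniqueness clause identifies the restricted groupoid with the action groupoid $G\times\lm\rightrightarrows\lm$. Restriction of a Lie groupoid to such a transversal is a Morita equivalence, so the quotient stacks $\X_\vec{\Delta}=[\lm/G]$ and $\X_\vec{\Sigma}=[Z_\Sigma/G_\C]$ are equivalent over $\Diff$; concretely the equivalence is realised by the $G_\C$-equivariant diffeomorphism $G_\C\times_G\lm\xrightarrow{\sim}Z_\Sigma$ induced by the action map.
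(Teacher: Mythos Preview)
Your proposal is correct and follows essentially the same Kempf--Ness strategy as the paper: the paper establishes the decomposition $G_\C=G\times\exp(\i\g)$, the inclusion $\lm\subset Z_\Sigma$, and the fact that each $G_\C$-orbit in $Z_\Sigma$ meets $\lm$ in a single $G$-orbit (via a gradient-flow monotonicity argument equivalent to your convexity one), packaging this as a $G_\C$-equivariant diffeomorphism $\g\times\lm\xrightarrow{\sim} Z_\Sigma$. The only cosmetic difference is that the paper then constructs the stack equivalence by hand---sending a principal $G$-bundle $P\to\lm$ to the associated $G_\C$-bundle $P\times_{\lm}Z_\Sigma$ and checking that this functor is a monomorphism and an epimorphism---rather than invoking Morita equivalence, but the content is the same.
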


First of all, we note that the natural embedding $\T \to \TC$ induces the commutative diagram
\[
 \xymatrix@C=50pt@R=20pt{
  G_\C \ar[r]^{\rho_\C} & \TC^d \\
  G \ar[r]_{\rho}\ar[u] & \T^d. \ar[u]
 }
\]
Here $G = \Hom_\Z(\DG(\beta),\T)$ as Section~\ref{section:DM_stack_associated_to_stacky_polytope}. The homomorphisms $G \to G_\C$ and $\T^d \to \TC^d$ are both embeddings.

\begin{lem}
 \label{lem:complexification}
 Regarding $G$ as a subgroup of $G_\C$ via the above embedding, we have $G_\C = G \times \exp(\i\g)$. Here $\g = \Lie(G)$.
\end{lem}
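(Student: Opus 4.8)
The plan is to prove the decomposition explicitly, exploiting that both $G$ and $G_\C$ are described as $\Hom$-groups out of $\DG(\beta)$ and that $\Hom_\Z(\DG(\beta),-)$ is an additive functor sending direct products to direct products. Conceptually this is nothing but the polar (Cartan) decomposition of the complexification of a compact abelian Lie group, but the $\Hom$-description makes everything completely concrete.

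First I would split the target torus. Writing a complex number as $a+\i b$ with $a,b\in\R$, the quotient map $\C\to\TC=\C/\Z$ restricts to an isomorphism of Lie groups
\[
 \T\oplus\exp_\TC(\i\R)\xrightarrow{\ \sim\ }\TC,
\]
since the real axis maps onto the embedded subgroup $\T=\R/\Z$, the imaginary axis maps injectively (as $\i b\in\Z$ forces $b=0$), and the two images meet only in the identity while together spanning $\TC$. Thus $\TC=\T\oplus\exp_\TC(\i\R)$ as an internal direct product of Lie subgroups. Applying $\Hom_\Z(\DG(\beta),-)$ to this splitting then gives an internal direct product
\[
 G_\C=\Hom_\Z(\DG(\beta),\TC)=\Hom_\Z(\DG(\beta),\T)\oplus\Hom_\Z\bigl(\DG(\beta),\exp_\TC(\i\R)\bigr),
\]
in which the first factor is exactly $G$, and the inclusion $G\incl G_\C$ induced by $\T\incl\TC$ is the inclusion of this first factor.

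It then remains to identify the second factor with $\exp(\i\g)$. For this I would use that $\g=\Lie(G)=\Hom_\Z(\DG(\beta),\R)$ — the finite torsion of $\DG(\beta)$ contributes nothing, as $\Hom_\Z$ of a finite group into $\R$ vanishes — so that $\Lie(G_\C)=\Hom_\Z(\DG(\beta),\C)=\g\oplus\i\g$ and $\exp_{G_\C}$ is post-composition with $\exp_\TC\colon\C\to\TC$. For $\psi\in\g$ one has $\exp(\i\psi)=\exp_\TC\circ(\i\psi)$, which factors through $\exp_\TC(\i\R)$; since $\exp_\TC$ restricts to an isomorphism of $\i\R$ onto the imaginary factor, the assignment $\psi\mapsto\exp(\i\psi)$ is an isomorphism of $\g$ onto $\Hom_\Z\bigl(\DG(\beta),\exp_\TC(\i\R)\bigr)$. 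Substituting this into the previous display yields $G_\C=G\times\exp(\i\g)$.

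The only delicate point — and the main, if modest, obstacle — is the bookkeeping in this last step: one must keep the three exponential maps $\exp_\TC$, $\exp_G$, $\exp_{G_\C}$ straight and check that post-composition with $\exp_\TC$ really carries $\i\g$ onto the entire imaginary factor, not merely a subgroup. This uses the injectivity of $\exp_\TC$ on $\i\R$ together with the fact that $\DG(\beta)$ is finitely generated, which guarantees that the $\Hom$-groups are genuine Lie groups whose Lie algebras are computed by the same $\Hom$-functor.
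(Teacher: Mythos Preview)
Your argument is correct. The paper takes a complementary route: rather than splitting the \emph{target} $\TC=\T\oplus\exp_{\TC}(\i\R)$ and then applying $\Hom_\Z(\DG(\beta),-)$, it splits the \emph{source} $\DG(\beta)\cong D_{\mathrm{free}}\oplus D_{\mathrm{tor}}$ into free and torsion parts and applies $\Hom_\Z(-,\T)$ and $\Hom_\Z(-,\TC)$ to each summand. The two key observations there are that $\Hom_\Z(D_{\mathrm{tor}},\T)=\Hom_\Z(D_{\mathrm{tor}},\TC)$ (a finite group can only hit roots of unity) and that $\Hom_\Z(D_{\mathrm{free}},\TC)=\Hom_\Z(D_{\mathrm{free}},\T)\times\exp(\i\g)$, the ordinary polar decomposition of a complex torus. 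Your target-side splitting is slightly cleaner in that it avoids the non-canonical choice of a complement to $D_{\mathrm{tor}}$ and handles the torsion uniformly via $\Hom_\Z(D_{\mathrm{tor}},\R)=0$; the paper's source-side splitting, on the other hand, makes visible that the entire finite component group of $G$ already lies inside $G\subset G_\C$ and gains nothing under complexification. Both are really two faces of the same Cartan decomposition, so there is no substantive gap between them.
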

\begin{proof}
Let $D_\mathrm{tor}$ be the torsion submodule of the finitely generated $\Z$-module $\DG(\beta)$ and $D_\mathrm{free} = \DG(\beta)/D_\mathrm{tor}$. Since $\DG(\beta) \cong D_\mathrm{free} \oplus D_\mathrm{tor}$, we have 
$G = \Hom_\Z(D_\mathrm{free},\T)\times\Hom_\Z(D_\mathrm{tor},\T)$. Because the abelian group $\Hom_\Z(D_\mathrm{tor},\T)$ is finite, the Lie algebra of $\Hom_\Z(D_\mathrm{free},\T)$ is $\g$. We also have 
\[
 G_\C = \Hom_\Z(D_\mathrm{free},\TC)\times\Hom_\Z(D_\mathrm{tor},\TC).
\]
The lemma follows from $\Hom_\Z(D_\mathrm{tor},\T)=\Hom_\Z(D_\mathrm{tor},\T_\C)$ and $\Hom_\Z(D_\mathrm{free},\TC) = \Hom_\Z(D_\mathrm{free},\T) \times \exp(\i\g)$.
\end{proof}

\begin{lem}
 \label{lem:nonzero_index}
 Define $I(z) = \{ \alpha \!\ |\!\ z_\alpha = 0 \}$ for $z \in \C^d$. Suppose $F$ is a face of $\Delta$ and $\sigma_F$ the cone associated to $F$. Then $z^{\hat{\sigma}_F} \ne 0$ if and only if $F \subset \bigcap_{\alpha \in I(z)} F_\alpha$.
\end{lem}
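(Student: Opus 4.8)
The plan is to prove the stated equivalence by a direct translation between two pieces of combinatorial data: the monomial $z^{\hat{\sigma}_F}$ attached to the cone $\sigma_F$, and the incidence between the face $F$ and the facets $F_\alpha$. First I would unwind the definition $z^{\hat{\sigma}_F} = \prod_{\alpha:\ \rho_\alpha \not\subset \sigma_F} z_\alpha$. A product of complex numbers is nonzero exactly when none of its factors vanish, so $z^{\hat{\sigma}_F} \ne 0$ holds if and only if $z_\alpha \ne 0$ for every index $\alpha$ with $\rho_\alpha \not\subset \sigma_F$. Reformulating in terms of the vanishing set $I(z)$, this says that no index of $I(z)$ may have its ray outside $\sigma_F$; that is, $z^{\hat{\sigma}_F} \ne 0$ if and only if $I(z) \subseteq \{\alpha \mid \rho_\alpha \subset \sigma_F\}$.

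The second step is to identify the index set $\{\alpha \mid \rho_\alpha \subset \sigma_F\}$ geometrically. By the construction of $\Sigma_\Delta$ recalled just before the theorem, the cone $\sigma_F$ is generated precisely by those $\bar{n}_\alpha$ for which $F_\alpha \supset F$. I would then argue that $\rho_\alpha \subset \sigma_F$ holds if and only if $\alpha$ is one of these generating indices, i.e.\ if and only if $F_\alpha \supset F$. The implication ``generating $\Rightarrow$ contained'' is immediate. For the converse I would invoke that $\Sigma_\Delta$ is simplicial and that its cones form a fan: a ray $\rho_\alpha$ of $\Sigma$ contained in $\sigma_F$ equals the common face $\rho_\alpha \cap \sigma_F$, hence is a face of $\sigma_F$, and therefore an extremal ray of $\sigma_F$; the extremal rays of $\sigma_F$ are exactly its minimal generators, namely the $\bar{n}_\beta$ with $F_\beta \supset F$.

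Combining the two steps yields the claim: $z^{\hat{\sigma}_F} \ne 0$ if and only if every $\alpha \in I(z)$ satisfies $F \subset F_\alpha$, which is precisely the condition $F \subset \bigcap_{\alpha \in I(z)} F_\alpha$. Most of this is a routine chase through the definitions of $z^{\hat{\sigma}}$, of $I(z)$, and of the polytope-to-fan correspondence.

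I expect the only genuine point requiring care to be the converse direction in the second step, namely ruling out that a ray $\rho_\alpha$ which is \emph{not} among the generators of $\sigma_F$ could still lie inside $\sigma_F$. This is exactly where the simplicial hypothesis on $\Sigma_\Delta$ (equivalently, the simplicity of $\Delta$) is essential: linear independence of the minimal generators guarantees that $\sigma_F$ is a simplicial cone whose only rays are its generating rays, so that no non-generating ray can appear as a nonnegative combination of the generators. Without this, the clean correspondence between ``$\rho_\alpha \subset \sigma_F$'' and ``$F_\alpha \supset F$'' would fail, and with it the equivalence of the lemma.
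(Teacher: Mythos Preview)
Your proposal is correct and follows essentially the same route as the paper's proof: unwind the definition of $z^{\hat\sigma_F}$ to get the condition ``$\alpha\in I(z)\Rightarrow\rho_\alpha\subset\sigma_F$'', then translate $\rho_\alpha\subset\sigma_F$ into $F\subset F_\alpha$ via the definition of $\sigma_F$. The only difference is that the paper treats the equivalence $\rho_\alpha\subset\sigma_F\Leftrightarrow F_\alpha\supset F$ as immediate from the definition, whereas you supply the extra justification via the fan axiom and simpliciality; note, incidentally, that the fan property alone (rays contained in a cone are faces of it, and the $1$-dimensional faces of $\sigma_F$ are exactly the $\rho_\beta$ with $F_\beta\supset F$ by duality of face lattices) already gives this, so simpliciality is convenient but not strictly essential for this particular step.
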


\begin{proof}
Since $z^{\hat{\sigma}_F} = \prod_{\alpha: \rho_\alpha \not\subset \sigma}z_\alpha$, the monomial $z^{\hat{\sigma}_F}$ is not zero if and only if for all $\alpha$, $\rho_\alpha \not\subset \sigma_F$ implies $z_\alpha \ne 0$. Because of the definition of $\sigma_F$, this is equivalent to the statement that for all $\alpha$, $\alpha \in I(z)$ implies $F \subset F_\alpha$. 
\end{proof}

Consider the family $\F_\tau = \bigl\{I(z) \!\ \big|\!\ z \in \lm\bigr\}$. Using the function (\ref{eq:mu_zero}), we have 
\begin{align*}
 \mu_0\bigl(\lm\bigr) 
 &= \Bigl\{ \textstyle\sum_\alpha \pi|z_\alpha|^2\vec{e}^\alpha \in (\R^d)^\dual \ \Big|\ z \in \C^d,\ \textstyle\sum_\alpha \pi|z_\alpha|^2w^\alpha = \tau \Bigr\} \\
 &= \bigl\{ s \in (\R^d)^\dual \ \big|\!\ \pair{s,\vec{e}_\alpha} \geq 0 \text{ for all } \alpha,\ \rho^\dual(s)=\tau \bigr\}.
\end{align*}
We denote by $\Delta_\tau$ the above set. Then $I \in \F_\tau$ if and only if there is $s \in \Delta_\tau$ such that $I=\{ \alpha\!\ |\!\ s_\alpha=0 \}$.

\begin{lem}
 \label{lem:surjectivity}
 The space $Z_\Sigma$ includes $\lm$. Moreover $Z_\Sigma = \exp(\i\g) \cdot \lm$.
\end{lem}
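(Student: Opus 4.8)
The plan is to prove the two assertions separately and to funnel both of them through the combinatorial description of $Z_\Sigma$ supplied by Lemma~\ref{lem:nonzero_index}. Throughout I use the set $S(z)=\{\alpha\mid z_\alpha\neq0\}=\{1,\dots,d\}\setminus I(z)$, the faces $\Delta_I=\{\eta\in\Delta\mid\pair{\eta,\bar n_\alpha}=-c_\alpha\text{ for }\alpha\in I\}$, and the identification $\Delta\cong\Delta_\tau=\mu_0(\lm)$ recorded just before the statement, under which $\Delta_I$ becomes $\{s\in\Delta_\tau\mid s_\alpha=0\text{ for }\alpha\in I\}$. The first observation I would make is that Lemma~\ref{lem:nonzero_index} yields the clean equivalence $z\in Z_\Sigma\iff\Delta_{I(z)}\neq\varnothing$: indeed $z\in Z_\Sigma$ means $z^{\hat\sigma_F}\neq0$ for some face $F$, i.e.\ $F\subset\bigcap_{\alpha\in I(z)}F_\alpha=\Delta_{I(z)}$, which is possible with a nonempty face exactly when $\Delta_{I(z)}$ itself is nonempty (one then takes $F=\Delta_{I(z)}$).

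For the inclusion $\lm\subset Z_\Sigma$ I would argue directly. Given $z\in\lm$, set $s=\mu_0(z)\in\Delta_\tau$; then $\{\alpha\mid s_\alpha=0\}=I(z)$, so $I(z)\in\F_\tau$ and in particular $\Delta_{I(z)}\neq\varnothing$. By the equivalence above, $z\in Z_\Sigma$. This immediately gives the easy half of the equality: since $\V(J_\Sigma)$ is a union of coordinate subspaces it is $\TC^d$-invariant, hence $Z_\Sigma$ is invariant under $G_\C$ and a fortiori under $\exp(\i\g)\subset G_\C$, so $\exp(\i\g)\cdot\lm\subset Z_\Sigma$.

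The substance of the lemma is the reverse inclusion $Z_\Sigma\subset\exp(\i\g)\cdot\lm$. Because $\lm$ is $G$-invariant and $G_\C=G\times\exp(\i\g)$ by Lemma~\ref{lem:complexification}, this is equivalent to $Z_\Sigma=G_\C\cdot\lm$, i.e.\ to showing every $G_\C$-orbit meeting $Z_\Sigma$ also meets $\lm$. Fixing $z\in Z_\Sigma$, I would seek $\xi\in\g$ with $\mu(\exp(\i\xi)\cdot z)=\tau$. Writing $t=\rho_*(\xi)\in V:=\operatorname{im}(\rho_*)=\ker(\R^d\to\t)$, the complexified action rescales the moduli by $|z_\alpha|^2\mapsto e^{k t_\alpha}|z_\alpha|^2$ for a fixed constant $k>0$, so by \eqref{eq:mu_zero}--\eqref{eq:moment_map_of_G} together with $\ker\rho^\dual=V^\perp$, the equation $\mu(\exp(\i\xi)\cdot z)=\tau$ is precisely the critical-point equation on $V$ of the convex function
\[
 \Psi(t)=\tfrac{\pi}{k}\sum_{\alpha}e^{k t_\alpha}|z_\alpha|^2-\pair{\textstyle\sum_\alpha c_\alpha\vec{e}^\alpha,\,t},\qquad t\in V.
\]
Since $\Psi$ is convex, it suffices to show that $\Psi$ attains its infimum on $V$, and the minimizer then supplies the desired $\xi$ with $\exp(\i\xi)\cdot z\in\lm$.

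The attainment of this infimum is the main obstacle, and it is exactly where the hypothesis $z\in Z_\Sigma$ is consumed. Examining the recession behaviour along rays $t=r\theta$ ($\theta\in V$, $r\to\infty$), the exponential terms force $\Psi\to+\infty$ unless $\theta_\alpha\leq0$ for every $\alpha\in S(z)$, in which case $\Psi(r\theta)\sim-r\pair{\sum_\alpha c_\alpha\vec{e}^\alpha,\theta}$. Hence $\Psi$ is coercive modulo its lineality exactly when $\pair{\sum_\alpha c_\alpha\vec{e}^\alpha,\theta}<0$ for every nonzero recession direction $\theta$ outside the lineality space, and by Farkas' lemma this dual condition is equivalent to the primal feasibility statement that there exists $s$ with $s_\alpha\geq0$, $s_\alpha=0$ for $\alpha\in I(z)$, and $\rho^\dual(s)=\tau$ --- that is, to $\Delta_{I(z)}\neq\varnothing$, which is just $z\in Z_\Sigma$. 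Thus $z\in Z_\Sigma$ guarantees the minimizer, the minimizer produces $\xi$, and $\exp(\i\xi)\cdot z\in\lm$. The only genuinely delicate point is the recession/Farkas bookkeeping, in particular correctly handling the lineality directions of $\Psi$ (where it is constant) so that "bounded below'' is upgraded to "minimum attained''; the remainder is the standard Kempf--Ness correspondence for abelian actions.
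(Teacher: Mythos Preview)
Your route differs from the paper's. For the hard inclusion $Z_\Sigma\subset\exp(\i\g)\cdot\lm$ the paper does not run a Kempf--Ness argument; it invokes \cite[Theorem~5.18]{guillemin02:_momen_hamil} to obtain $\exp(\i\g)\cdot\lm=G_\C\cdot\lm=\bigcup_{I\in\F_\tau}O_I$ where $O_I=\{z:I(z)=I\}$, and then checks the purely combinatorial identity $Z_\Sigma=\bigcup_{I\in\F_\tau}O_I$ via Lemma~\ref{lem:nonzero_index}. Your characterisation $z\in Z_\Sigma\iff\Delta_{I(z)}\neq\varnothing$ is essentially the same combinatorics. What you add is a direct convex minimisation in place of the citation, which makes the proof self-contained at the cost of the analysis below.

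The point you flag is a genuine gap as written: Farkas in your formulation only yields that $\Psi$ is bounded below (think of $e^{-t}$), not that the infimum is attained. Two further inputs close it. First, simplicity of $\Delta$ ensures that the nonempty face $\Delta_{I(z)}$ lies in no facet $F_\alpha$ with $\alpha\in S(z)$, so its relative interior furnishes an $s\in\Delta_\tau$ with $s_\alpha=0$ for $\alpha\in I(z)$ and $s_\alpha>0$ for $\alpha\in S(z)$; since $s-\sum_\alpha c_\alpha\vec{e}^\alpha\in V^\perp$, every recession direction $\theta\in V$ with $\theta|_{S(z)}\le0$ and $\theta|_{S(z)}\neq0$ then satisfies the strict inequality $\pair{\textstyle\sum_\alpha c_\alpha\vec{e}^\alpha,\theta}=\pair{s,\theta}<0$. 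Second, regularity of $\tau$ (Lemma~\ref{lem:reg-val-and-local-freeness}), applied at any $z'\in\lm$ with $I(z')=I(z)$, forces the lineality space $\{\theta\in V:\theta|_{S(z)}=0\}$ to vanish. Together these make $\Psi$ coercive on $V$, so the minimum exists and your argument goes through.
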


\begin{proof}
For a set $I \subset \{1,\dots,d\}$, define $O_I = \bigl\{ z \in \C^d \!\ \big|\!\ z_\alpha=0 \text{ if and only if } \alpha \in I\bigr\}$. According to Guillemin--Ginzburg--Karshon \cite[Theorem 5.18]{guillemin02:_momen_hamil}, we have
\[
 \exp(\i\g) \cdot \lm = G_\C \cdot \lm = \bigcup_{I \in \F_\tau}O_I.
\]
Therefore it suffices to show that $Z_\Sigma = \bigcup_{I \in \F_\tau}O_I$.

Suppose $z \in O_I$ for some $I \in \F_\Delta$. Then there exists $s \in \Delta_\tau$ such that $s_\alpha = 0$ if and only if $\alpha \in I(z)$. Since $s \in F = \bigcap_{\alpha \in I(z)} F_\alpha$, $F$ is a nonempty face of $\Delta$. Lemma~\ref{lem:nonzero_index} says that $z^{\hat\sigma_F} \ne 0$. Thus $z \in Z_\Sigma$.

Conversely suppose $z \in Z_\Sigma$. Then there exists a face $F$ of $\Delta$ such that $z^{\hat\sigma_F} \ne 0$. Lemma~\ref{lem:nonzero_index} says that $F \subset \bigcap_{\alpha \in I(z)} F_\alpha$. Since $\bigcap_{\alpha \in I(z)} F_\alpha$ is not empty, we have $I(z) \in \F_\Delta$ and $z \in O_{I(z)}$.
\end{proof}

\begin{lem}
 \label{lem:stabilizer_blongs_to_compact_torus}
 If both $z \in \lm$ and $u \cdot z \in \lm$ hold for $z \in \lm$ and $u \in G_\C$, then $u \in G$.
\end{lem}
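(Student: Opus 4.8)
The plan is to reduce to a purely imaginary group element using the direct product decomposition of Lemma~\ref{lem:complexification}, and then to extract a monotonicity property of the moment map along the complexified direction. First I would write $u = g\cdot\exp(\i\xi)$ with $g \in G$ and $\xi \in \g$, which is possible by Lemma~\ref{lem:complexification}; it then suffices to prove $\xi = 0$, since this forces $\exp(\i\xi)=\1$ and hence $u = g \in G$. Because $G$ is compact abelian with trivial adjoint representation, the moment map $\mu$ is literally $G$-invariant, so $\mu(u\cdot z) = \mu(\exp(\i\xi)\cdot z)$. Consequently the hypotheses $z \in \lm$ and $u\cdot z \in \lm$ give $\mu(z) = \mu(\exp(\i\xi)\cdot z) = \tau$.

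The heart of the argument is the real function $\phi(t) = \pair{\mu(\exp(\i t\xi)\cdot z),\xi}$ on $[0,1]$. Setting $a_\alpha = \pair{w^\alpha,\xi}$, the infinitesimal action is $\xi_{\C^d}(z) = (-2\pi\i\, a_\alpha z_\alpha)_\alpha$ and the complexified flow is $\exp(\i t\xi)\cdot z = (e^{2\pi t a_\alpha}z_\alpha)_\alpha$, so from \eqref{eq:moment_map_of_G} a direct computation gives
\[
 \phi(t) = \pi\sum_{\alpha=1}^d a_\alpha\, e^{4\pi t a_\alpha}\,|z_\alpha|^2,
 \qquad
 \phi'(t) = 4\pi^2\sum_{\alpha=1}^d a_\alpha^2\, e^{4\pi t a_\alpha}\,|z_\alpha|^2 \geq 0.
\]
Thus $\phi$ is nondecreasing, while $\phi(0) = \phi(1) = \pair{\tau,\xi}$ by the previous paragraph; hence $\phi$ is constant and $\phi'(0)=0$. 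Since every summand of $\phi'(0)$ is nonnegative, this forces $a_\alpha = 0$ for each $\alpha$ with $z_\alpha \ne 0$, and therefore $\xi_{\C^d}(z) = 0$. Finally, as $\tau$ is a regular value of $\mu$, Lemma~\ref{lem:reg-val-and-local-freeness} applied to $z\in\lm$ yields $\xi = 0$, as required.

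I expect the main obstacle to be the monotonicity step: one must correctly identify the complexified flow (in particular get the sign in the exponent right, so that $\phi'\geq 0$ rather than $\phi'\leq 0$) and then argue that the vanishing of $\phi'(0)$, a sum of nonnegative terms weighted by $|z_\alpha|^2$, is genuinely equivalent to the vanishing of the infinitesimal action $\xi_{\C^d}(z)$ at $z$ --- the bookkeeping over the indices $\alpha$ with $z_\alpha = 0$ versus $z_\alpha \ne 0$ is where care is needed. The conceptual content is the standard fact that the imaginary directions generate the gradient flow of the moment-map components; one could alternatively phrase the whole argument in those terms, or deduce it from the orbit description of Guillemin--Ginzburg--Karshon used in Lemma~\ref{lem:surjectivity}, but the explicit computation above is self-contained.
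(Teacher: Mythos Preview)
Your proposal is correct and follows essentially the same route as the paper: decompose $u = g\exp(\i\xi)$ via Lemma~\ref{lem:complexification}, show that $\pair{\mu,\xi}$ is monotone along the curve $t\mapsto\exp(\i t\xi)\cdot z$, conclude it is constant since the endpoints lie in $\lm$, deduce $\xi_{\C^d}(z)=0$, and finish with Lemma~\ref{lem:reg-val-and-local-freeness}. The only cosmetic difference is that the paper interprets the curve as an integral curve of $\grad\pair{\mu,\xi}$ and obtains the derivative as $\|\dot c(\lambda)\|^2$ via $\omega_0$ and $J$, whereas you compute the flow and $\phi'$ explicitly in coordinates; you yourself note this equivalence in your final paragraph.
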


\begin{proof}
Lemma~\ref{lem:complexification} says that there are $\xi \in \g$ and $g \in G$ with $u = g\exp(\i\xi)$. Supposing $g\exp(\i\xi) \cdot z \in \lm$, we show that $\xi=0$.

The infinitesimal action of $\xi$ at $z' = (z'_1,\dots,z'_d) \in \C^d$ is given by
\begin{align*}
 \xi_{\C^d}(z')
 &= \bigl(-2\pi\i\pair{w_1,\xi}z'_1,\dots,-2\pi\i\pair{w_d,\xi}z'_d\bigr)
\end{align*}
under the usual identification $T_{z'} \C^d \cong \C^d$. Since a moment map of the $G$-action on $\C^d$ is given by $\mu: \C^d \to \g^\dual$ in \eqref{eq:moment_map_of_G}, the infinitesimal action $\xi_{\C^d}$ is the same as $J\grad \pair{\mu,\xi}$, where $J$ is the complex structure on $\C^d$ and $\grad \pair{\mu,\xi}$ is the gradient vector field of $\pair{\mu,\xi}$ with respect to the standard Riemannian metric on $\C^d$.

Consider the smooth curve $c$ in $\C^d$ defined by
\[
 c: \R \to \C^d;\ \lambda \mapsto g\exp(\i\lambda\xi) \cdot z.
\]
The velocity vector field of the curve is given by
\begin{align*}
 \dot{c}(\lambda)
 &= \bigl(2\pi\pair{w_1,\xi}c_1(\lambda),\dots,2\pi\pair{w_d,\xi}c_d(\lambda)\bigr) 
 = -J \xi_{\C^d}(c(\lambda)) 
 = \grad \pair{\mu,\xi} |_{c(\lambda)}.
\end{align*}
Thus the curve $c$ is an integral curve of $\grad\pair{\mu,\xi}$ and $\pair{\mu(c(\lambda)),\xi}$ is a non-decreasing function in $\lambda$. Since both $c(0) = g \cdot z$ and $c(1) = u \cdot z$ belong to $\lm$, the $1$-form $d\pair{\mu(c(\lambda)),\xi}$ vanishes on $0 \leq \lambda \leq 1$. We have 
\begin{align*}
 \bigl( d\pair{\mu(c(\lambda)),\xi} \bigr) \dot{c}(\lambda)
 &= \omega_0\bigl(\xi_{\C^d}(c(\lambda)),\dot{c}(\lambda)\bigr) 
 =  \bigl( J\dot{c}(\lambda), J\dot{c}(\lambda) \bigr)_{\C^d} 
 =  \bigl\| \dot{c}(\lambda) \bigr\|^2,
\end{align*}
where $(\cdot,\cdot)_{\C^d} = \omega_0(\cdot,J\cdot)$ is the standard Riemannian metric on $\C^d$. The above calculation implies that $\dot{c}(\lambda)=0$ if $0 \leq \lambda \leq 1$. Therefore $ \xi_{\lm}(z) =  \xi_{\C^d}(z) = J\dot{c}(0) = 0$. Since the $G$-action is locally free, we can conclude that $\xi=0$.
\end{proof}

\begin{lem}
 \label{lem:diffeomorphism}
 The map
 \[
 \phi :  \g \times \lm \to Z_\Sigma;\
 (\xi,z) \mapsto \exp(\i\xi) \cdot z
 \]
 is $G_\C$-equivariant diffeomorphism. Here the $G_\C$-action on $\g \times \lm$ is defined by
 \[
 G_\C = G \times \exp(\i\g) \acts \g \times \lm;\
 \bigl(g,\exp(\i\theta)\bigr) \cdot (\xi,z) = (\theta+\xi,g \cdot z).
 \]
\end{lem}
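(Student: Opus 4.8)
The plan is to verify the three properties separately—equivariance, bijectivity, and smoothness of the map together with its inverse—while leaning on the structural results already established. First I would check $G_\C$-equivariance, which is purely formal: using the decomposition $G_\C = G \times \exp(\i\g)$ from Lemma \ref{lem:complexification} and the stated $G_\C$-action on $\g \times \lm$, I compute $\phi\bigl((g,\exp(\i\theta)) \cdot (\xi,z)\bigr) = \phi(\theta+\xi,g\cdot z) = \exp(\i(\theta+\xi)) \cdot (g\cdot z)$, and compare this with $(g,\exp(\i\theta)) \cdot \phi(\xi,z) = g\exp(\i\theta) \cdot \exp(\i\xi) \cdot z$. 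Since $G_\C$ is abelian (as $G$ is abelian and $\exp(\i\g)$ is its complexified direction), these agree, so $\phi$ is equivariant.

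Next I would establish that $\phi$ is surjective and injective. Surjectivity is essentially Lemma \ref{lem:surjectivity}: that lemma gives $Z_\Sigma = \exp(\i\g) \cdot \lm$, so every point of $Z_\Sigma$ has the form $\exp(\i\xi)\cdot z$ with $\xi \in \g$ and $z \in \lm$, which is exactly $\phi(\xi,z)$. For injectivity, suppose $\phi(\xi,z) = \phi(\xi',z')$, i.e.\ $\exp(\i\xi)\cdot z = \exp(\i\xi')\cdot z'$. Then $z' = \exp(\i(\xi-\xi'))^{-1}\cdot \exp(\i\xi-\i\xi')\cdot z'$ can be rearranged to show $\exp(\i(\xi-\xi'))\cdot z' = z$ (up to the commuting $G$-factor), so the element $u = \exp(\i(\xi-\xi')) \in G_\C$ sends $z' \in \lm$ into $\lm$. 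By Lemma \ref{lem:stabilizer_blongs_to_compact_torus}, $u \in G$; combined with $u \in \exp(\i\g)$ and the fact that $G \cap \exp(\i\g) = \{\1\}$ in the decomposition $G_\C = G \times \exp(\i\g)$, this forces $\xi = \xi'$ and hence $z = z'$. So $\phi$ is a bijection.

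Finally I would address smoothness of $\phi$ and of $\phi^{-1}$. Smoothness of $\phi$ is clear, since it is the restriction of the smooth action map $\exp(\i\g) \times \C^d \to \C^d$. The genuine obstacle is showing the inverse is smooth, equivalently that $d\phi$ is everywhere an isomorphism (so that $\phi$, being a smooth bijection with everywhere-invertible differential, is a diffeomorphism). The source and target both have real dimension $2d - \dim G$ (since $\lm$ has dimension $2d - 2\dim G$ and we adjoin the $\dim G$-dimensional factor $\g$, while $Z_\Sigma$ is open in $\C^d$ quotiented appropriately—one must track these dimensions carefully), so it suffices to prove $d\phi$ is injective. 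I would argue this by unwinding the tangent map: a tangent vector in $\g \times T_z\lm$ mapping to zero would produce, via the gradient-flow computation already carried out in the proof of Lemma \ref{lem:stabilizer_blongs_to_compact_torus}, a nonzero vector $\exp(\i\xi)_* \xi_{\C^d}(z)$ that is forced to vanish; the key identity $\xi_{\C^d} = J\grad\pair{\mu,\xi}$ shows the $\exp(\i\g)$-directions are transverse to $T_z\lm = \ker d\mu(z)$, so the only kernel element is zero. The main technical care lies in this transversality/dimension bookkeeping, since the local freeness of the $G$-action (Lemma \ref{lem:reg-val-and-local-freeness}) is what guarantees $\xi \mapsto \xi_{\C^d}(z)$ is injective and hence that the normal directions genuinely add $\dim\g$ independent directions. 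Once $d\phi$ is shown injective, the inverse function theorem yields local smooth inverses that patch to the global smooth inverse, completing the proof that $\phi$ is a $G_\C$-equivariant diffeomorphism.
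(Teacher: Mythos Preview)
Your approach matches the paper's: surjectivity from Lemma~\ref{lem:surjectivity}, injectivity from Lemma~\ref{lem:stabilizer_blongs_to_compact_torus} together with $G \cap \exp(\i\g) = \{\1\}$, and the diffeomorphism property via the transversality of $\exp(\i\g)$-orbits to $\lm$ extracted from the gradient-flow computation in that lemma's proof. The paper packages the last step slightly differently---it computes $d\phi(\xi,z)(\theta,v) = d(\exp(\i\xi))(\theta,v)$ directly via a curve and then reads off bijectivity, rather than arguing injectivity plus a dimension count---but the content is the same.

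One correction to your bookkeeping: the level set $\lm$ has real dimension $2d - \dim G$, not $2d - 2\dim G$ (the latter is the dimension of the reduced space $\lm/G$, not of the level set itself), so $\g \times \lm$ has dimension $2d$. And $Z_\Sigma$ is simply an open subset of $\C^d$, not a quotient of anything, so it too has dimension $2d$. The dimensions still agree and your injectivity-of-$d\phi$ argument survives intact, but the numbers as written are wrong and the parenthetical about $Z_\Sigma$ being ``quotiented appropriately'' should be dropped.
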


\begin{proof}
According to Lemma~\ref{lem:surjectivity}, the map $\phi$ is well-defined and surjective. Since $G \cap \exp(\i\g) = \{\1\}$, Lemma~\ref{lem:stabilizer_blongs_to_compact_torus} implies that $\phi$ is injective. It is trivial that the map $\phi$ is $\exp(\i\g)$-equivariant. Therefore it suffices to see that the derivative $d\phi(\xi,z)$ is bijective for any $(\xi,z) \in \g \times \lm$. 

For $v \in T_z\lm$ and $\theta \in \g$, take a curve $c:  (-\epsilon,\epsilon) \to \lm$ satisfying $c(0)=z$ and $\dot{c}(0)=v$. According to the proof of Lemma~\ref{lem:stabilizer_blongs_to_compact_torus}, $\exp(\i\g)$-orbits transverse to $\lm$. Therefore the curve
\[
\gamma:  (-\epsilon,\epsilon) \to Z_\Sigma;\ 
\lambda\mapsto \exp(\lambda\i\theta)c(\lambda)
\]
satisfies $\gamma(0)=z$ and $\dot{\gamma}(0)=(\theta,v) \in \t \times T_z\lm \cong T_z Z_\Sigma$. Then we have 
\begin{align*}
 d\phi(\xi,z)(\theta,v)
 &= \left.\dfrac{d}{d\lambda}\right|_{\lambda=0} \phi(\xi+\lambda\theta,c(\lambda)) \\
 &= \left.\dfrac{d}{d\lambda}\right|_{\lambda=0} \exp(\i\xi) \cdot \gamma(\lambda) \\
 &= d\bigl(\exp(\i\xi)\bigr)(\theta,v).
\end{align*}
Therefore $d\phi(\xi,z)$ is bijective.
\end{proof}

Under the diffeomorphism $\phi: \g \times \lm \to Z_\Sigma$, define the $G$-equivariant map $\psi: Z_\Sigma \to \lm$ as the second projection map.

\begin{proof}%
 [Proof of Theorem~\ref{thm:equivalence_between_two_stacks}]
We define a morphism of stacks $\Phi: \X_\vec{\Delta}\to\X_\vec{\Sigma}$ as follows. For an object $U \stackrel{\pi}{\longleftarrow} P \stackrel{\epsilon}{\longrightarrow} \lm$ of $\X_\vec{\Delta}$ over a manifold $U$, the morphism $\Phi$ assigns to it the object
\[
 \xymatrix@C=40pt{
 U &
 P \times_{\lm} Z_\Sigma \ar[l]_(0.6){\Phi_\pi} \ar[r]^(0.6){\Phi_\epsilon} &
 Z_\Sigma
 }
\]
of $\X_\vec{\Sigma}$, where $\Phi_\epsilon:  P \times_{\lm} Z_\Sigma \to Z_\Sigma$ is defined by the fibred square
\[
 \xymatrix@C=50pt{
  P \times_{\lm} Z_\Sigma \ar[r]^(0.6){\Phi_\epsilon} \ar[d] &
  Z_\Sigma \ar[d]^{\psi} \\
  P \ar[r]_(0.45){\epsilon} & 
  \lm,
 }
\]
and $\Phi_\pi$ sends $(p,z)$ to $\pi(p)$. A free right action of $G_\C$ on $P \times_{\lm} Z_\Sigma$ is defined by
\[
 P \times_{\lm} Z_\Sigma \curvearrowleft G_\C;\
 (p,z) \cdot u = (p \cdot \nu(u), u^{-1} \cdot z),
\]
where $\nu: G_\C = G \times \exp(\i\g) \to G$ is the first projection. Then $\Phi_\pi$ is a principal $G_\C$-bundle. 

Let $(f,\tilde{f}): P \to P'$ be an arrow from $P$ to $P'$ in $\X_\vec{\Delta}$. We assign to $\Phi(f,\tilde{f})$ the map $P \times_{\lm} Z_\Sigma \to P' \times_{\lm} Z_\Sigma$ sending $(p,z) \to (\tilde{f}(p),z)$.

It suffices to see that $\Phi$ is a monomorphism and an epimorphism \cite[Proposition 2.1]{behrend08:_differ_stack_gerbes}. 

First we show that the $\Phi$ is a monomorphism. Suppose that we have two objects $P$ and $P'$ of $\X_\vec{\Delta}$ over $U$ and an arrow $\beta$ from $\Phi(P)$ to $\Phi(P')$. The arrow $\beta$ is a $G_\C$-equivariant diffeomorphism which makes the following diagram commute.
\[
 \xymatrix@R=2pt{
   & & P \times_{\lm} Z_\Sigma \ar@<-1ex>[lld]_(0.6){\Phi_{\pi}} \ar@<1ex>[rrd]^(0.6){\Phi_{\epsilon}} \ar@<-4pt>[dd]^{\beta} & & \\
 U & & & & Z_\Sigma. \\
   & & P' \times_{\lm} Z_\Sigma \ar@<1ex>[llu]^(0.6){\Phi_{\pi'}} \ar@<-1ex>[rru]_(0.6){\Phi_{\epsilon'}} & &v
 }
\]
We must to show that there exists uniquely a $G$-equivariant diffeomorphism $\alpha$ from $P$ to $P'$. Since $\psi: Z_\Sigma \to \lm$ is a (trivial) principal $\exp(\i\g)$-bundle, so are $P \times_{\lm} Z_\Sigma \to P$ and $P' \times_{\lm} Z_\Sigma \to P'$. Therefore the $G_\C$-equivariant diffeomorphism $\beta$ induces a $G$-equivariant diffeomorphism $\alpha: P \to P'$. Then $\beta$ is a bundle map over $\alpha$:
\[
 \xymatrix{
  P \times_{\lm} Z_\Sigma \ar[r]^{\beta} \ar[d] & 
  P' \times_{\lm} Z_\Sigma \ar[d] 
  \\
  P \ar[r]_{\alpha} &
  P' 
 }
\]
Since $\Phi_{\epsilon} = \Phi_{\epsilon'} \circ \beta$, we obtain $\beta(p,z)=\bigl(\alpha(p),z\bigr)$. Taking $z \in Z_\Sigma$ with $\epsilon(p)=\psi(z)$, we have
\[
 \epsilon'(\alpha(p))
 = \psi\bigl(\Phi_{\epsilon'}(\beta(p,z))\bigr)
 = \psi\bigl(\Phi_{\epsilon}(p,z)\bigr)
 = \epsilon(p),
\]
and 
\[
 \pi'(\alpha(p)) = \Phi_{\pi'}(\beta(p,z)) = \Phi_{\pi}(p,z) = \pi(p).
\]
Therefore $\beta = \Phi_\alpha$. The arrow $\alpha$ is unique because $\beta$ must be a bundle map over $\alpha$.

Next we show that the $\Phi$ is an epimorphism. Suppose we have an object $\widetilde{P}$ of $\X_\vec{\Sigma}$ over a manifold $U$. Putting $P = \widetilde{P}\big/\!\exp(\i\g)$, we obtain a commutative diagram:
\[
 \xymatrix@R=3pt@C=50pt{
 & \widetilde{P}\ar[dd]\ar[r]^{\tilde{\epsilon}}\ar[ld]_{\tilde{\pi}}
 & Z_\Sigma \ar[dd]^{\psi} \\
 U & & \\
 & P \ar[r]_{\epsilon} \ar[lu]^{\pi} & \lm.
 }
\]
Then $P$ is an object of $\X_\vec{\Delta}$ over $U$ such that $\Phi(P)=\widetilde{P}$.
\end{proof}

\subsection*{Acknowledgements}
The author is grateful to River Chiang, Martin Guest and Reyer Sjamaar for valuable comments. This research is partially supported by NSC grant 98-2115-M-006-006-MY2, and the NCTS (South). This work was supported by National Science Council grant [98-2115-M-006-006-MY2]; and the National Center for Theoretical Sciences (South).

\medskip
{\em
\noindent
Max Planck Institute for Mathematics \newline
Vivatsgasse 7 \newline
53111 Bonn \newline
GERMANY

\noindent
E-mail: sakai@blueskyproject.net
}

\end{document}